\documentclass[11pt,oneside]{amsart}
\usepackage[utf8]{inputenc}
\usepackage{mathrsfs}

\usepackage{setspace}
\usepackage{amsfonts}
\usepackage{amsmath}
\usepackage{amssymb}
\usepackage{amsthm}
\usepackage{stmaryrd}
\usepackage{breqn}
\usepackage{graphicx,xcolor} % Required for inserting images
\usepackage[left=30mm,right=30mm,top=30mm,bottom=30mm]{geometry}
\usepackage{hyperref}
\usepackage{ytableau}
\usepackage{physics}
\usepackage{esint}
\usepackage{todonotes} 
\usepackage{breqn}
\usepackage{mathtools}
\usepackage{tikz}
\usepackage{subcaption}
\usepackage{enumitem}

\usepackage{graphicx}

\newcommand{\case}[1]{\smallskip\noindent\textit{Case #1.}}
\theoremstyle{definition}
\newtheorem{theorem}{Theorem}[section]
\newtheorem{proposition}[theorem]{Proposition}

\newtheorem{example}[theorem]{Example}
\newtheorem{lemma}[theorem]{Lemma}

\theoremstyle{remark}
\newtheorem{remark}[theorem]{Remark}

\numberwithin{equation}{section}

\hypersetup{colorlinks,breaklinks,
            linkcolor = purple,
            citecolor = teal,
            urlcolor = purple,
            }
\definecolor{darkblue}{rgb}{0.1,0.1,0.7}
\definecolor{darkred}{rgb}{0.5,0.1,0.1}
\definecolor{darkgreen}{rgb}{0.0,0.42,0.06}

\title{Multicritical Scaling Limit of Shifted Schur Measure}

\begin{document}

\begin{abstract}
We investigate the multicritical scaling limit of the shifted Schur measures. 
Under an appropriate scaling limit and specific conditions on the continuous parameters, we explicitly determine the limit shape of strict partitions distributed according to the shifted Schur measure. 
We then show that, under a multicritical condition, the edge scaling limit of the correlation function converges to a determinant of the higher-order Airy kernel. This rigorously demonstrates a transition from a Pfaffian point process to a determinantal distribution in the scaling limit.
\end{abstract}

\author{Haruna Aida}
\address{Department of Mathematics, Hokkaido University, Sapporo, Japan}

\author{Taro Kimura}
\address{Université Bourgogne Europe, CNRS, IMB UMR 5584, Dijon, France}

\maketitle

\setcounter{tocdepth}{1}
\tableofcontents

%\newpage

\section{Introduction}
A \emph{random partition} is a random variable that takes a value in the set of partitions. 
In the large-scale limit, the profile of partitions, distributed under an appropriate measure, converges to a continuous curve known as the \emph{limit shape}. 
In this context, fluctuation in the vicinity of the profile edge is particularly interesting, since it typically shows a universal structure observed in various probabilistic and statistical models.
The limit to focus on the edge fluctuation is called the \emph{edge scaling limit}.

The limit shape was first discovered for the Plancherel measure by Logan and Shepp \cite{MR1417317} and Vershik and Kerov \cite{MR480398}. 
Via Kerov’s transition measures \cite{MR1251166}, this limit shape matches Wigner’s semicircle of Gaussian random matrices. 
In fact, this convergence can be viewed as the free central limit theorem in free probability \cite{MR1644993}. 
Moreover, as shown by Baik, Deift, and Johansson~\cite{MR1682248}, Borodin, Okounkov, and Olshanski~\cite{MR1758751} and Johansson~\cite{MR1826414}, the edge scaling limit of the random permutations and the Plancherel measure yields the \emph{Tracy-Widom distribution}, originally introduced to describe the fluctuation of the largest eigenvalue of the Gaussian Unitary Ensemble \cite{MR1257246}. 
As these examples suggest, different probability models often share the universal behaviour in the edge scaling limit.

We may regard the Plancherel measure as a special case of the \emph{Schur measures} introduced by Okounkov~\cite{MR1856553}. 
The Schur measure assigns to each partition $\lambda$ a probability proportional to the product of two Schur functions:
\begin{equation*}
    \mathbb{P}_{\text{S}}(\lambda) = \frac{1}{Z_{\text{S}}} s_\lambda(\mathbf{x}) s_\lambda(\mathbf{y}),
\end{equation*}
where $\mathbf{x}$ and $\mathbf{y}$ are sets of parameters, and $Z_{\text{S}}$ is a normalisation constant depending on $\mathbf{x}$ and $\mathbf{y}$. This algebraic definition naturally yields a \emph{determinantal point process}, whose correlation functions are given by determinants constructed from a correlation kernel, and satisfy the KP (or 2D Toda lattice) hierarchy \cite{MR1856553}. 
In the context of the boson-fermion correspondence of type $A_{\infty}$, the Schur measure is described via a \emph{coherent state} (a simultaneous eigenstate of annihilation operators) of \emph{charged} free fermions. The variables $\mathbf{x}$ and $\mathbf{y}$ (or their power-sum counterparts, called the Miwa variables) then represent the eigenvalues of this state. To determine the limit shape and the edge scaling limit explicitly, we need to evaluate these eigenvalues for a specific choice of parameters.

Betea, Bouttier, and Walsh \cite{MR4693927} (announced in~\cite{Betea:2020}) studied a specialisation called the \emph{multicritical Schur measures}, finding both the limit shape and the edge scaling limit. 
The edge scaling limit corresponds to the higher-order Tracy-Widom distribution \cite{PhysRevLett.121.030603}, which can be obtained by replacing the Airy function in the definition of Tracy--Widom distribution with the {\it higher Airy function}.
\footnote{Depending on the context, several different definitions of the higher Airy function are known (see e.g., \cite{MR1171758}). We follow the convention in \cite{MR4700870}.}
These results were also obtained independently by the second-named author and Zahabi \cite{Kimura:2020sud,MR4700870}; see also \cite{Kimura:2021lrc}. 
Whereas Betea, Bouttier, and Walsh directly computed the correlation kernel in the multicritical limit, the second-named author and Zahabi demonstrated that the auxiliary functions converge to higher Airy functions, which in turn implies that the correlation kernel converges to the higher Airy kernel. 

\subsubsection*{Summary of results}
In this paper, we investigate the multicritical scaling limit for the \emph{shifted Schur measures} \cite{MR2060026}. They are rooted in the boson-fermion correspondence of type $B_\infty$ for \emph{neutral} (or \emph{Majorana}) fermions \cite{MR723457}. Algebraically, for a \emph{strict} partition $\lambda$, the shifted Schur measure is given by
\begin{equation*}
    \mathbb{P}_{\text{SS}}(\lambda; t) = \frac{1}{Z} P_\lambda(t) Q_\lambda(t),
\end{equation*}
where $P_\lambda(t),Q_\lambda(t)$ are Schur's $P,Q$-functions, respectively, depending on $t=(t_{1},t_{3},\dots)$ a set of infinite parameters, and $Z$ is a normalisation constant. See \autoref{sec:SS_measure} for details. This change in the underlying fermionic structure—from charged to neutral—is reflected in the stochastic properties of the measure: it gives rise to a \emph{Pfaffian point process}, and its correlation functions satisfy the BKP hierarchy rather than the KP hierarchy \cite{MR2139724, WANG2019447}.
Our first main result describes the limit shape of strict partitions under the shifted Schur measure:

\begin{theorem}[Theorem~\ref{thm:limit_shape}]
    Subject to specific conditions on the parameter $t=(t_{1},t_{3},\dots)$, and under an appropriate scaling limit, the limit shape of a partition distributed according to the shifted Schur measure is given by
    \begin{equation*}
    x+\frac{2}{\pi} \int_0^\pi \max\{D(\theta)-x,0\}{\rm d}\theta \quad (x\ge 0),
    \end{equation*}
    where $D(\theta) = 4 \sum_{n: \text{odd}}nt_n \cos{n\theta}$.
\end{theorem}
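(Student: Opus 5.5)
We derive the limit shape from the one-point density of the Pfaffian point process $\{\lambda_i\}\subset\mathbb{Z}_{>0}$ attached to $\mathbb{P}_{\text{SS}}$. The first step is to write the correlation kernel as a double contour integral. Following the neutral-fermion (type $B_\infty$) formalism, the kernel is built from
\begin{equation*}
J(z)=\exp\Big(2\sum_{n:\text{odd}} t_n (z^n - z^{-n})\Big),
\end{equation*}
and the diagonal entry of the $2\times 2$ matrix kernel takes the form
\begin{equation*}
K(k,k)=\oint\oint \frac{dz\,dw}{(2\pi i)^2 z^{k+1}w^{-k+1}} \frac{J(z)}{J(w)}\,\frac{z-w}{2(z+w)}\cdots .
\end{equation*}
The precise integrand (the factor $\frac{z-w}{z+w}$ and the overall signs) should be taken from \autoref{sec:SS_measure}. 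We use only the structure $K\sim \oint\oint z^{-k}w^{k}J(z)/J(w)\times(\text{rational})$. The density $\rho(k)=\mathbb{P}(k\in\lambda)$ is the relevant off-diagonal Pfaffian entry.

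Next we rescale: $t_n\mapsto L t_n$, $k=Lx$, with $L\to\infty$. The integrand becomes $\exp\big(L(S(z)-S(w))\big)$ with
\begin{equation*}
S(z)=2\sum_{n:\text{odd}} t_n(z^n-z^{-n})-x\log z .
\end{equation*}
On the unit circle $z=e^{i\theta}$ we have $z\,S'(z)=2\sum_{n:\text{odd}} n t_n(z^n+z^{-n})-x = D(\theta)-x$, where $D(\theta)=4\sum_{n:\text{odd}} nt_n\cos n\theta$. The standard Okounkov-type argument then applies. We deform the $z$- and $w$-contours through the critical points on the unit circle, which are the solutions of $D(\theta)=x$. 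The "specific conditions on $t$" are those ensuring that these are the only relevant critical points and that the steepest-descent deformation is admissible, e.g.\ $D$ has the required monotonicity structure. When the contours are swapped, we pick up the residue at $z=w$; the pole at $z=-w$ contributes the same by the symmetry $\theta\to\pi-\theta$ together with the oddness of $n$. The remaining double integral is exponentially small. The residue is a single integral over the arc of the unit circle where $D(\theta)>x$, so
\begin{equation*}
\rho(Lx)\to \frac{1}{\pi}\,\big|\{\theta\in[0,\pi]: D(\theta)>x\}\big| .
\end{equation*}
The parity symmetry makes the arc in $[0,2\pi]$ reduce to twice an arc in $[0,\pi]$, which produces the normalisation $1/\pi$.

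Finally we convert the density into a profile. For a strict partition, the shifted-diagram profile $\omega(x)$ satisfies $\omega'(x)=1-2\rho$ in suitable coordinates, or equivalently $\omega(x)=x+2\int_x^\infty\rho(s)\,ds$. Substituting the limit density and applying Fubini gives
\begin{equation*}
\omega(x)=x+\frac{2}{\pi}\int_0^\pi\int_x^\infty \mathbf{1}_{D(\theta)>s}\,ds\,d\theta = x+\frac{2}{\pi}\int_0^\pi\max\{D(\theta)-x,0\}\,d\theta ,
\end{equation*}
which is the claimed limit shape. Convergence of the random profile itself follows from convergence of $\mathbb{E}$ together with a variance bound. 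The variance bound comes from the two-point function, which is a Pfaffian with off-diagonal decay, so the variance of linear statistics is $o(L^2)$.

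The main obstacle is the steepest-descent step. We must choose contours so that $\operatorname{Re}S$ is maximised on the $z$-contour at, and only at, the critical arc endpoints, and minimised correspondingly on the $w$-contour. We must also handle the extra pole at $z=-w$ that is specific to the Pfaffian/BKP kernel, and ensure it contributes exactly the symmetric partner of the $z=w$ residue rather than an additional term. The conditions on $t$ should be formulated precisely so that this contour geometry works. We would first attempt the case where $D$ is monotone on $[0,\pi]$, and then treat finitely many monotone pieces.
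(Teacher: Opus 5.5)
Your overall strategy works: steepest descent for the one-point function, then integrating the density. But the pole bookkeeping as you state it is wrong.

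\textbf{The pole analysis.} Your integrand mixes two forms. With the ratio $\mathbb{J}(z)/\mathbb{J}(w)$ the rational factor must be $\frac{z+w}{z-w}$. The factor $\frac{z-w}{z+w}$ that you wrote vanishes at $z=w$, so there would be nothing to pick up there. The paper starts from $\rho(a)=(-1)^aK(a,-a)$ (the off-diagonal entry; $K(a,a)=0$ for $a\neq0$). It uses $\mathbb{J}(-w)=\mathbb{J}(w)^{-1}$, valid for real $t$ and odd $n$, and substitutes $w\mapsto -w$. This gives \eqref{eq:chamu}:
$\rho(a)=\frac{1}{2(2\pi{\rm i})^2}\oiint_{|z|>|w|}\frac{\mathbb{J}(z)}{\mathbb{J}(w)}\frac{z+w}{z-w}\frac{{\rm d}z\,{\rm d}w}{z^{a+1}w^{-a+1}}$.

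In this form there is a single pole, at $z=w$. The pole at $z=-w$ of the original kernel is this same pole in other coordinates, not a ``symmetric partner''. Its residue is $2/w$, since $(z+w)|_{z=w}=2w$ cancels the prefactor $\frac12$. Integrating over the arc $\theta\in(-\chi(x),\chi(x))$ gives $\frac{1}{2\pi}\cdot2\chi(x)=\chi(x)/\pi$, as in \eqref{eq:diagonalbulk}. If $z=-w$ really ``contributed the same'', you would get $2\chi(x)/\pi$, which exceeds $1$ once $\chi>\pi/2$. So your final density is right, but not for the reason you give.

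Also, the leftover double integral is not exponentially small, because $\operatorname{Re}[S(z)-S(w)]\to0$ at the crossing points $e^{\pm{\rm i}\chi}$. It is $o(1)$ by dominated convergence, since a simple pole at a transversal crossing is integrable.

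\textbf{Comparison with the paper's route.} The paper sums over $k$ inside the integral first, using the geometric series on $|z|>|w|$, and gets the kernel $\frac{z+w}{z(z-w)^2}$ in \eqref{eq:profile}. The double-pole residue $\frac{2}{\varepsilon}S'(w,x)-\frac1w$, integrated over $c_\chi$, gives $\frac{2}{\pi}\int_0^{\chi(x)}(D(\theta)-x)\,{\rm d}\theta$ directly. Condition (iv), monotonicity of $D$, then turns this into the max form. This buys exactness: there is no limit--sum interchange. The price is a non-integrable $(z-w)^{-2}$ singularity at $e^{\pm{\rm i}\chi}$ in the remainder, which is why the paper differentiates in $1/\varepsilon$.

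Your density-then-Fubini route only meets a simple pole, and it produces the max form directly from Fubini without invoking monotonicity at that step. However, you must justify
$\lim_{\varepsilon\to0}2\varepsilon\sum_{k\ge1}\rho(x/\varepsilon+k;t/\varepsilon)=2\int_x^\infty\lim\rho$,
which you do not address. You need three things:
\begin{itemize}
\item convergence of $\rho(m;t/\varepsilon)$ along $\varepsilon m\to s$ for $s\neq b$;
\item the bound $0\le\rho\le1$ on $[x,b+\delta]$;
\item a uniform tail bound for $s>b$.
\end{itemize}
The tail bound follows from $\rho(m)=\frac12J(m)^2+\sum_{k\ge1}J(m+k)^2$ (use $J(-i)=(-1)^iJ(i)$) together with $|J(m;t/\varepsilon)|\le\exp\bigl(\eta(b+O(\eta^2))/\varepsilon-m\eta\bigr)$, obtained on the contour $|z|=e^{\eta}$.

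Finally, your variance bound goes beyond what the paper proves (it only treats $\lim\mathbb{E}[\psi_{\lambda,\varepsilon}]$) and is not needed for this statement.
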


%Our conditions on the parameter $t$ are somewhat broader than those required for the standard Schur measure as shown in \cite{MR4693927}. 
In order to discuss the limit shape itself, the multicriticality condition is not yet necessary. 
Although it may not be immediately obvious from the formula, this limit shape is non-differentiable at the point $x=4\sum_{n:\text{odd}}nt_{n}$, which is identified as the profile edge. We provide visual examples of these shapes in \autoref{fig:Density_Shape}.

We now shift our focus to the fluctuations near the edge, specifically in the specific scaling limit.
More generally, we study the edge scaling limit of the $N$-point correlation function:
\begin{equation*}
    \rho_{\rm SS}(a_1,\ldots,a_N;t) = \sum_{\lambda \supset \{a_1,\ldots,a_N\}} \mathbb{P}_{\rm SS}(\lambda;t).
\end{equation*}
To investigate the edge scaling behaviour, we prescribe the $p$-multicritical condition on the parameter $t$ for $p\in 2 \mathbb{Z}_{>0}$. 
Our second main result is stated as follows:

\begin{theorem}[Theorem~\ref{thm:kernel_limit}]
    Suppose the parameter $t=(t_{1},t_{3},\dots)$ satisfies specific conditions including the $p$-multicritical condition. 
    Then, under an appropriate scaling, the edge scaling limit of the $N$-point correlation function is given by
    \begin{equation*}
        \det_{1\le i,j \le N} K_{p\text{-Airy}}(a_i,a_j),
    \end{equation*}
    where $K_{p\text{-Airy}}$ denotes the $p$-Airy kernel (see \autoref{app:higher_Airy}).
\end{theorem}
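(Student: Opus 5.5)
The shifted Schur measure is a Pfaffian point process, so the plan is to start from its Pfaffian structure and show that, at a multicritical edge, the Pfaffian degenerates into a determinant. By Matsumoto's result \cite{MR2139724}, $\rho_{\rm SS}(a_1,\dots,a_N;t)=\operatorname{Pf}[\mathbb{K}(a_i,a_j)]_{1\le i,j\le N}$. Here $\mathbb{K}$ is a $2\times2$ matrix kernel with entries of the form
\begin{equation*}
K(x,y)=\frac{1}{(2\pi i)^2}\oint\oint \frac{J(z)}{J(w)}\frac{z-w}{2(z+w)(z-1)(w+1)}\,\frac{{\rm d}z\,{\rm d}w}{z^{x+1}w^{-y+1}},
\qquad J(z)=\exp\Bigl(2\sum_{n:\text{odd}}t_n\frac{z^n-z^{-n}}{n}\Bigr)
\end{equation*}
(or equivalent variants). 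The key structural fact is that $J(z)J(-z)=1$ and $J(1/z)=J(z)^{-1}$. Consequently the $(1,1)$, $(1,2)$ and $(2,2)$ entries are all built from the same double contour integrals with $z\mapsto\pm z^{\pm1}$.

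First, I would rewrite everything in terms of the phase function
\begin{equation*}
S(z;x)=2\sum_{n:\text{odd}}\frac{t_n}{n}\,(z^n-z^{-n})-x\log z,
\end{equation*}
scaled by the large parameter, so that $t_n\to L t_n$ and $a_i = L\,c + L^{1/(p+1)}\xi_i$ with $c=4\sum_{n:\text{odd}} n t_n$ the edge from Theorem~\ref{thm:limit_shape}. The $p$-multicritical condition will be exactly the statement that $z=1$ is a saddle point of order $p+1$: that is, $\partial_z^kS(1;c)=0$ for $k\le p$ and $\partial_z^{p+1}S(1;c)\neq0$. By the $z\mapsto -z$ symmetry, the point $z=-1$ is then also degenerate. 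I would also use the hypotheses in the statement, namely that $D(\theta)<c$ for $\theta\in(0,\pi)$ and the sign conditions, to deform the contours to steepest-descent paths passing through $\pm1$ only. Near $z=1$ I substitute $z=e^{L^{-1/(p+1)}\zeta}$. This gives $L\,S\approx \kappa\zeta^{p+1}-\xi\zeta$, so the single integrals converge to $p$-Airy functions $\mathrm{Ai}_p(\xi)=\frac{1}{2\pi i}\int e^{\zeta^{p+1}/(p+1)-\xi\zeta}{\rm d}\zeta$. Here $p$ even guarantees that $p+1$ is odd, so that the contour structure is Airy-like.

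Second, I would analyse each entry of $\mathbb{K}$. The factor $\frac{z-w}{z+w}$ is regular at $z=w=1$ and tends to $\frac{\zeta-\omega}{2}\cdot L^{-1/(p+1)}$ up to normalisation. The factor $\frac1{(z-1)(w+1)}$ also needs care. The contributions from the neighbourhoods of $z,w\approx1$ and of $z,w\approx -1$ combine, via the symmetry $J(-z)=J(z)^{-1}$, with sign $(-1)^{x+y}$. The expected outcome is that, after conjugating $\mathbb{K}$ by a diagonal matrix of the form $\operatorname{diag}(L^{\alpha}(-1)^{a},L^{-\alpha}(-1)^{a})$ (which leaves the Pfaffian invariant), the limit becomes
\begin{equation*}
\mathbb{K}\to\begin{pmatrix}0 & K_{p\text{-Airy}}(\xi_i,\xi_j)\\ -K_{p\text{-Airy}}(\xi_j,\xi_i) & 0\end{pmatrix}.
\end{equation*}
Here the diagonal blocks vanish at leading order because they are antisymmetric combinations that cancel to first order at the symmetric saddle. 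The limit of the off-diagonal block is written as $\int_0^\infty\mathrm{Ai}_p(\xi+s)\mathrm{Ai}_p(\eta+s)\,{\rm d}s$ using $\frac{1}{\zeta+\omega}=\int_0^\infty e^{-s(\zeta+\omega)}{\rm d}s$. Finally, I would apply $\operatorname{Pf}\begin{pmatrix}0&A\\-A^T&0\end{pmatrix}=\pm\det A$, after reordering indices, to get the determinantal formula $\det K_{p\text{-Airy}}(a_i,a_j)$.

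The main obstacle is the second step. One must show that the diagonal entries $K_{11},K_{22}$ vanish in the limit at the correct relative rate compared with $K_{12}$, with uniform tail bounds away from $\pm1$ justifying dominated convergence. There is also a parity subtlety, since strict partitions may carry the $(-1)^{a}$ oscillation. I would first attempt to get the vanishing from the exact identity $K_{11}(x,y)=\,$ a difference of $K_{12}$-type integrals, which cancel at the saddle by the $z\mapsto1/z$ symmetry. I would then bound the remainder by the next order in the local expansion, and use the condition $D(\theta)<c$ for the global estimates.
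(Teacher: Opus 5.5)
Your overall architecture matches the paper's. You start from the Pfaffian of Theorem~\ref{thm:PfPP}, show that after dividing by $\varepsilon$ the two diagonal blocks vanish while the off-diagonal block tends to $K_{p\text{-}\mathrm{Airy}}$, and conclude with $\mathrm{Pf}\begin{pmatrix}O&A\\-A^{\top}&O\end{pmatrix}=\pm\det A$. For the off-diagonal block, your route (rescale $z=e^{\varepsilon\zeta}$, $w=e^{\varepsilon\omega}$ inside the double integral and use $\frac{1}{\zeta-\omega}=\int_0^\infty e^{-s(\zeta-\omega)}\,{\rm d}s$) is a legitimate alternative. The paper does the discrete version of the same thing before rescaling. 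Expanding the factor in \eqref{eq:K} as a geometric series in $w/z$ and using $J(-m)=(-1)^mJ(m)$ gives $(-1)^jK(i,-j)=\frac12J(i)J(j)+\sum_{k\ge1}J(i+k)J(j+k)$, so only single integrals are needed: $\varepsilon^{-1}J\to\mathrm{Ai}_p$ by stationary phase on $|z|=1$ (Theorem~\ref{thm:Airy}), the bound $|\varepsilon^{-1}J|\le Ce^{-x}$ by moving to $|z|=e^{\varepsilon}$ (Lemma~\ref{lem:J_bound}), and dominated convergence on the Riemann sum. This avoids having to control uniformly a double integral whose factor $\frac{z+w}{z-w}$ is of size $\varepsilon^{-1}$ at the saddle, which your route would still have to supply.

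The genuine gap is in the step you flag as the main obstacle, and it comes from three concrete errors. First, your integrand is not the kernel. \eqref{eq:K} contains no factor $\frac{1}{(z-1)(w+1)}$. The signed off-diagonal entry $(-1)^yK(x,-y)$ becomes, after $w\mapsto-w$ and $\mathbb{J}(-w)=\mathbb{J}(w)^{-1}$, the integral of $\frac{\mathbb{J}(z)}{\mathbb{J}(w)}\frac{z+w}{z-w}$ (cf.\ \eqref{eq:chamu}). With your integrand, which has exactly the off-diagonal shape $\frac{J(z)}{J(w)}z^{-x-1}w^{y-1}$, the powers of $\varepsilon$ in $\frac{z-w}{z+w}\approx\frac{\varepsilon(\zeta-\omega)}{2}$ and $\frac{1}{z-1}\approx\frac{1}{\varepsilon\zeta}$ cancel. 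The entry is then $O(\varepsilon^2)$ and $\varepsilon^{-1}K_{12}\to0$, so no Airy kernel can come out. Second, $z=-1$ is not a degenerate critical point of $S(\cdot\,;c)$. Indeed $\varphi'(\pi)=-2a$, and \eqref{eq:C} gives $\varphi'(\theta)\le-\theta^p/C$ on $[-\pi,\pi]$. So every entry has exactly one relevant saddle: $z=w=1$ for $K(x,y)$ and for the off-diagonal entry in the form above, and $z=w=-1$ for $K(-x,-y)$. There are no contributions from $\pm1$ to combine.

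Third, with the correct kernel the diagonal blocks need no cancellation identity, no $z\mapsto1/z$ symmetry and no $\operatorname{diag}(L^{\alpha},L^{-\alpha})$ conjugation. Your remark that $\frac{z-w}{z+w}$ is regular at $z=w=1$ is the right observation, but it belongs to the diagonal blocks, and plain power counting finishes the job. In $K(x,y)$ both variables carry $\mathbb{J}$ with the same sign and localise on arcs of length $O(\varepsilon)$ around $1$, where $\frac{z-w}{z+w}=O(1)$. Hence $K=O(\varepsilon^2)$ and $\varepsilon^{-1}K\to0$. In the off-diagonal block, by contrast, the pole of $\frac{z+w}{z-w}$ sits on the saddle and supplies exactly the missing $\varepsilon^{-1}$. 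The paper makes this rigorous with $|z|=e^{2\varepsilon}$, $|w|=e^{\varepsilon}$ and the central region $|\arg z|,|\arg w|<\varepsilon^{1-\delta}$, where $p\delta>p-2$. Off this region, \eqref{eq:C} gives $\varepsilon^{-p-1}\operatorname{Re}S\le-\varepsilon^{-p\delta}/C$ for one variable, while the other contributes at most $O(\varepsilon^{-(p-2)})=o(\varepsilon^{-p\delta})$, which beats the crude bound $|\frac{z-w}{z+w}|=O(\varepsilon^{-1})$. On the region, the substitution $\theta=\varepsilon u$ gives $O(\varepsilon)$.
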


The case $p=2$ was obtained by Matsumoto \cite{MR2139724}. Remarkably, although the shifted Schur measure itself is a Pfaffian point process, its edge scaling limit converges to the \emph{determinant} of the $p$-Airy kernel. This transition arises because the $N$-point correlation function can be expressed as the Pfaffian of a $2N\times 2N$ matrix; as we take the scaling limit, the diagonal blocks of this matrix vanish. Consequently, the Pfaffian reduces to a determinant via the identity:
\begin{equation*}
    \mathrm{Pf}\begin{pmatrix}
    O & A\\ -A^{\top} & O
\end{pmatrix} = (-1)^{\binom{N}{2}} \det A.
\end{equation*}
The proof of this theorem provides a rigorous foundation for the approach suggested in \cite{Kimura:2020sud,MR4700870}.

\subsubsection*{Organisation of the paper}
The remainder of this paper is organized as follows. 
\autoref{sec:BF_corresp} briefly reviews the theory of neutral fermions and the associated boson-fermion correspondence, following \cite{WANG2019447}. 
In \autoref{sec:SS_measure}, we formulate the shifted Schur measures in terms of neutral fermions, showing that a correlation function are given by a Pfaffian, and deriving a double contour integral representation for the correlation kernel. 
\autoref{sec:lim_shape} is devoted to the limit shape analysis of the shifted Young diagrams, where we prove our first main theorem (Theorem \ref{thm:limit_shape}). 
In \autoref{sec:edge_lim}, we focus on the multicritical edge scaling limit and prove our second main theorem (Theorem \ref{thm:kernel_limit}), establishing the convergence of the correlation function to the determinant of the $p$-Airy kernel. 
Finally, \autoref{app:higher_Airy} summarizes the fundamental properties of higher Airy functions.

%\subsubsection*{Notation} 
%\rem{If needed}

\subsubsection*{Acknowledgments}
HA would like to express her gratitude to her primary supervisor, Professor Takahiro Hasebe, for his continuous mentorship and warm encouragement. 
She is also thankful to her secondary supervisor, Professor Ryosuke Sato, for taking the time to attend a seminar on this work and offering helpful comments.
The visit to UBE/IMB for the final stage of this project was supported by JSPS KAKENHI Grant Number JP22K03346.
This work was supported by EIPHI Graduate School (No.~ANR-17-EURE-0002) and the Bourgogne-Franche-Comté region.

\section{Neutral Fermions and Their Boson-Fermion Correspondence}
\label{sec:BF_corresp}
To pave the way for the definition of the shifted Schur measure, we first recall the essentials of the theory of neutral fermions (also known as Majorana fermions) and the associated boson-fermion correspondence, following the treatment in \cite{WANG2019447}.

Let $\psi_a,\psi_a^* \, (a \in \mathbb{Z})$ denote the charged fermions. They satisfy the standard anti-commutation relations 
\begin{equation*}
   [\psi_a,\psi_b]_{+}=0, \quad [\psi^{*}_a,\psi^{*}_b]_{+}=0, \quad [\psi_a,\psi^{*}_b]_{+}=\delta_{a+b,0},
\end{equation*} 
where $[A,B]_{+}=AB+BA$ denotes the anti-commutator.
Let $W_A$ be the complex linear space spanned by $\{ \psi_a,\psi_a^*\}_{a \in \mathbb{Z}}$. Then, these relations naturally give rise to the Clifford algebra $Cl(W_A)$ associated with the charged fermions.

The algebra $Cl(W_A)$ admits a canonical representation constructed as follows.
Consider a linear space spanned by the basis vectors $\{e_a\}_{a \in \mathbb{Z}}$, and let $\mathcal{F}_A$ be the Fock space generated by infinite wedge of the form 
\begin{equation}
    \cdots \wedge e_{a_3} \wedge e_{a_2} \wedge e_{a_1}, \quad a_1 > a_2 > a_3 > \cdots, 
    \label{eq:infwedge} 
\end{equation}
subject to the condition that the set of negative integers omitted from $\{a_k\}_{k=1,2,\ldots}$ is finite. 
We define the representation $\varrho_A \colon Cl(W_A) \to \mathrm{End}(\mathcal{F}_A)$ by its action: 
\begin{align*}
    \varrho_A(\psi_a)(\bullet) &= \bullet \wedge e_{-a}, \\
    \varrho_A(\psi^*_a)(\bullet \wedge e_a) &= \bullet.
\end{align*}
Upon interpreting the vector \eqref{eq:infwedge} as a quantum state where the sites $\{a_k\}_{k=1,2,\ldots}$ on a one-dimensional lattice are occupied by charged fermions, $\psi_a$ and $\psi^*_a$ act as creation and annihilation operators, respectively.

The neutral fermions $\phi_a \, (a \in \mathbb{Z})$ are characterised by the following anti-commutation relations: 
\begin{equation}
    [\phi_a,\phi_b]_+ = (-1)^a \delta_{a+b,0}.
    \label{eq:R_neu}
\end{equation}
These operators can be expressed in terms of the charged fermions $\psi_a,\psi^*_a$ via the relation
\begin{equation}
    \phi_a = \frac{\psi_a+(-1)^a\psi^*_{a}}{\sqrt{2}}.
    \label{eq:neutral}
\end{equation}
Letting $W_B$ denote the subspace of $W_A$ spanned by $\phi_a \, (a \in \mathbb{Z})$, we obtain the Clifford algebra $Cl(W_B)$ of neutral fermions as a subalgebra of $Cl(W_A)$.

The term `neutral' comes from the fact that creation and annihilation operators are linearly combined on the right-hand side of \eqref{eq:neutral}. 
In what follows, we shall be concerned with a representation in which $\phi_{a>0}$ and $\phi_{a<0}$ act as creation and annihilation operators, respectively. 
To this end, consider a linear space spanned by $e_a \, (a \ge 0)$, and let $\mathcal{F}_B$ be its exterior algebra.
A basis of $\mathcal{F}_B$ is given by 
\begin{equation}
     e_{a_l} \wedge \cdots \wedge e_{a_1}, \quad a_1 > \cdots > a_l,
     \label{eq:wedge}
\end{equation}
where the case $l=0$ is permitted (corresponding to the scalar $1 \in \mathbb{C}$). 
We define the representation $\varrho_B \colon Cl(W_B) \to \mathrm{End}(\mathcal{F}_B)$ by the action:
\begin{align*}
    \varrho_B(\phi_{a\ge 0})(\bullet) &= \bullet \wedge e_a \\
    \varrho_B(\phi_{-a\le 0)})(\bullet \wedge e_a) &= (-1)^a \bullet.
\end{align*}
Analogous to the charged fermion case, the vector \eqref{eq:wedge} can be interpreted as a quantum state in which the sites indexed by $a_1,\ldots,a_l$ on a one-dimensional lattice are occupied by neutral fermions.

For convenience, we shall henceforth suppress the symbol $\varrho_B$ and set $|0\rangle = 1 \in \mathcal{F}_B$. 
The vector (\ref{eq:wedge}) is then written as 
\begin{equation*}
    \phi_{a_1} \cdots \phi_{a_l} |0\rangle.
\end{equation*}
Representing the state in which no neutral fermions are present, $|0\rangle$ is referred to as the vacuum state.

The linear map 
\begin{equation*}
    \varrho_B(\cdot)(|0\rangle) : Cl(W_B) \longrightarrow \mathcal{F}_B
\end{equation*}
induces a vector space isomorphism 
\begin{equation*}
    \mathcal{F}_B \simeq Cl(W_B) \Big/ Cl(W_B) \Big( \sum_{a<0}\mathbb{C}\phi_a \Big).
\end{equation*}
Dually, we define the space $\mathcal{F}_{B}^{*}$ as the quotient
\begin{equation*}
    \mathcal{F}_{B}^{*} \coloneqq Cl(W_B) \Big/ \Big( \sum_{a>0}\mathbb{C}\phi_a \Big) Cl(W_B)
\end{equation*}
and denote by $\langle 0| \,\in\mathcal{F}_{B}^{*}$ the residue class of $1\in Cl(W_B)$. 
Since the multiplication in $Cl(W_B)$ descends to the map 
\begin{equation*}
    \mathcal{F}_{B}^{*} \times \mathcal{F}_B \to Cl(W_B) \Big/ \Big( \sum_{a>0}\mathbb{C}\phi_a \Big) Cl(W_B) \Big( \sum_{a<0}\mathbb{C}\phi_a \Big) \simeq \mathbb{C}\oplus\mathbb{C}\phi_0,
\end{equation*}
we obtain a bilinear pairing $\mathcal{F}_{B}^{*} \times \mathcal{F}_B \to \mathbb{C}$ by imposing
\begin{equation*}
    \langle 0|1|0 \rangle = 1, \quad \text{and} \quad \langle 0| \phi_0 |0 \rangle = 0.
\end{equation*}
The pairing is referred to as the vacuum expectation value.

Next, we introduce the operator $H_{n}$ defined by
\begin{equation*}
    H_n = \frac{1}{2} \sum_{a\in \mathbb{Z}} (-1)^{a+1} \phi_a \phi_{-a-n}.
\end{equation*}
This is termed the Hamiltonian, a choice motivated in part by the following commutation relation:
\begin{equation}
    [H_n,\phi_a]=
    \begin{cases}
    \phi_{a-n} & \text{for $n$ odd} \\
    0 & \text{for $n$ even}.
    \end{cases}
    \label{eq:R_H_phi}
\end{equation}
By assigning an energy $a$ to each neutral fermion $\phi_a$, the action of $H_n$ on $\phi_a$ can be seen to decrease the energy by $n$.
In this setting, the operators $H_n$ vanish for even $n$, whilst the remaining operators $\{H_n \mid n \in 2\mathbb{Z}+1\}$ generate a Heisenberg algebra satisfying 
\begin{equation}
    [H_m,H_n] = \frac{m}{2}\delta_{n+m,0}.
    \label{eq:Hamilton}
\end{equation}
This correspondence yields a bosonisation of the neutral fermions.

Further to the above, we introduce the formal variables $t=(t_1,t_3,\dots)$ (often referred to as \emph{Miwa variables}) and a complex variable $z \in \mathbb{C}$. 
We define 
\begin{equation*}
    H_+(t) = \sum_{n\ge1, \text{odd}} t_n H_n, \quad \phi(z) = \sum_{a\in \mathbb{Z}} \phi_a z^a, \quad \xi(t,z)=\sum_{n\ge 1, \text{odd}}t_n z^n.
\end{equation*}
These definitions lead to the identity
\begin{equation*}
    e^{H_+(t)}\phi(z)e^{-H_+(t)} = e^{\xi(t,z)}\phi(z).
\end{equation*}

We are now in a position to state the boson-fermion correspondence for neutral fermions \cite{MR723457}. 
First, we partition the Fock space into its even and odd components: 
\begin{equation*}
    \mathcal{F}_B^0 = \text{span} \{\phi_{a_1}\cdots\phi_{a_{2k}} |0\rangle\}, \quad \mathcal{F}_B^1 = \text{span} \{\phi_{a_1}\cdots\phi_{a_{2k-1}} |0\rangle \}.
\end{equation*}
We define the linear maps 
\begin{equation}\label{map_BScor}
\begin{split}
    \sigma^i \colon \mathcal{F}_{B}^i &\to \mathbb{C}[t_1,t_3,\ldots] \\
    |U\rangle &\mapsto \langle i | e^{H_+(t)} | U \rangle,
\end{split}
\end{equation}
where we set $|1\rangle = 2^{\frac{1}{2}}\phi_0|0\rangle$. 
It is worth noting that although $e^{H_{+}(t)}$ is a formal power series, the image $\sigma^{i}(|U\rangle)$ is always a polynomial in $t$; this is guaranteed by the relation \eqref{eq:R_H_phi}.

\begin{theorem}[Boson-fermion correspondence for neutral fermions \cite{{MR723457}}]
\label{BScor}
The homomorphism $\sigma^i$ constitutes an isomorphism of representations between
\begin{equation*}
    (\mathcal{F}_B^i, H_{n\ge 1}, H_{n\le -1})
    \quad \text{and} \quad
    (\mathbb{C}[t_1,t_3,\ldots], \partial/\partial t_n, (n/2)t_n).
\end{equation*}
Moreover, the fermionic action on the Fock space is intertwined with the action of the vertex operator
\begin{equation}
V(z) = V^{-}(z)V^{+}(z),
\end{equation}
on $\mathbb{C}[t_1,t_3,\dots]$ via the relation
\begin{equation*}
\sigma^i \circ \phi(z) = 2^{-\frac{1}{2}}V(z) \circ \sigma^{1-i.},
\end{equation*}
where the components of $V(z)$ are given by
\begin{align*}
V^{-}(z) = \exp\left(2\sum_{n\ge 1, \text{odd}} \frac{z^n}{n}\frac{n}{2}t_n\right)= \exp\xi(t,z), \quad
V^{+}(z) = \exp\left(2\sum_{n\ge 1, \text{odd}} \frac{z^{-n}}{-n} \frac{\partial}{\partial t_{n}}\right).
\end{align*} 
\end{theorem}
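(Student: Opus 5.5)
The proof splits into two parts: first the isomorphism of Heisenberg modules, then the vertex operator formula, which I derive from the commutation relations \eqref{eq:R_H_phi} and \eqref{eq:Hamilton}.

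\textbf{Intertwining and isomorphism.} For $n\ge1$ odd, the derivative of $\sigma^i(|U\rangle)=\langle i|e^{H_+(t)}|U\rangle$ in $t_n$ is $\langle i|e^{H_+(t)}H_n|U\rangle$, since the $H_n$ with $n>0$ commute among themselves by \eqref{eq:Hamilton}. So $\partial/\partial t_n$ corresponds to $H_n$. For $H_{-n}$ I use $\langle i|H_{-n}=0$; this holds because $H_{-n}$ raises energy and only creation operators survive against $\langle 0|$ and $\langle 1|$. I then move $H_{-n}$ to the left through $e^{H_+(t)}$:
\[
e^{H_+(t)}H_{-n}e^{-H_+(t)}=H_{-n}+t_n[H_n,H_{-n}]=H_{-n}+\tfrac n2 t_n .
\]
This gives multiplication by $(n/2)t_n$. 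Thus $\sigma^i$ intertwines the two Heisenberg actions. Polynomiality of $\sigma^i(|U\rangle)$ was noted just before the statement.

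For bijectivity, the target is the Fock representation of the Heisenberg algebra. It is irreducible and generated by $1$ as a cyclic vector, and $\sigma^i(|i\rangle)=1$. Hence $\sigma^i$ is surjective once it is a module map with $1$ in its image, and its kernel is a submodule. It remains to show that $\mathcal F_B^i$ is itself generated by $|i\rangle$ under the $H_{-n}$. I would do this by a character (graded dimension) comparison. Grade by total energy $\sum a_k$. The character of $\mathcal F_B^0\oplus\mathcal F_B^1$ is $2\prod_{k\ge1}(1+q^k)$; the factor $2$ comes from $\phi_0$. The character of $\mathbb C[t_1,t_3,\dots]$ with $\deg t_n=n$ is $\prod_{n\text{ odd}}(1-q^n)^{-1}$. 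Euler's identity $\prod(1+q^k)=\prod_{\text{odd}}(1-q^n)^{-1}$ equates the two graded dimensions. Injectivity then follows from surjectivity plus equality of finite graded dimensions, since $\sigma^i$ preserves degree.

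\textbf{Vertex operator formula.} Set $f(t)=\langle i|e^{H_+(t)}\phi(z)|U\rangle$. First move $\phi(z)$ left using
\[
e^{H_+(t)}\phi(z)e^{-H_+(t)}=e^{\xi(t,z)}\phi(z).
\]
This produces the factor $V^-(z)$ and leaves $\langle i|\phi(z)e^{H_+(t)}|U\rangle$. Next I need the analogous relation for $H_-$, namely $e^{-H_-(s)}\phi(z)e^{H_-(s)}=e^{-\xi(s,z^{-1})}\phi(z)$ with $H_-(s)=\sum s_nH_{-n}$ (up to sign conventions), which I check from \eqref{eq:R_H_phi}. I then write $\langle i|\phi(z)$ as a multiple of $\langle 1-i|$ dressed by $e^{-2\sum H_{-n}z^{-n}/n}$, or equivalently use the shift $t\mapsto t-2z^{-n}/n$. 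Concretely, $\langle i|\phi(z)$ applied to states equals $2^{-1/2}\langle 1-i|\,e^{\sum_n (2z^{-n}/n)H_n\cdots}$.

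Rather than proving that operator identity directly, I would verify the identity $\sigma^i\circ\phi(z)=2^{-1/2}V(z)\circ\sigma^{1-i}$ through the commutation relations. Both sides satisfy the same commutation relations with $\partial_{t_n}$ and $t_n$: by \eqref{eq:R_H_phi}, $[H_n,\phi(z)]=z^n\phi(z)$, and this matches $[\partial_{t_n},V(z)]=z^nV(z)$ and $[\tfrac n2t_n,V(z)]=z^{-n}V(z)$. Since the target module is irreducible, the two sides then agree up to a scalar function of $z$. I fix that scalar by evaluating on $|U\rangle=|1-i\rangle$: for $i=0$, $\langle0|\phi(z)\,2^{1/2}\phi_0|0\rangle=2^{1/2}\cdot\tfrac12=2^{-1/2}$, using $\phi_0^2=1/2$. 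This yields the factor $2^{-1/2}$.

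\textbf{Main obstacle.} The delicate step is this last one: the Schur-lemma argument must be justified for operators taking values in formal series in $z$, and the sign conventions between $\langle 0|$ and $\langle 1|$ must be tracked carefully.
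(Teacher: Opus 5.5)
The paper does not prove this statement; it quotes it from \cite{MR723457}, so there is no in-paper argument to compare with. On its own merits, your outline is the standard argument and is sound.

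\textbf{The isomorphism.} This half is complete.
\begin{itemize}
\item The intertwining computations are right.
\item The image of $\sigma^i$ contains $\sigma^i(|i\rangle)=1$ and is stable under multiplication by the $t_n$, so $\sigma^i$ is surjective.
\item $\sigma^i$ sends energy-$E$ states to homogeneous polynomials of degree $E$, because $\langle i|$ pairs nontrivially only with energy-zero states.
\item Euler's identity then forces injectivity.
\end{itemize}
One bookkeeping point: you need the character of each $\mathcal F_B^i$ separately, not of their sum. Both equal $\prod_{k\ge1}(1+q^k)$, because adjoining or deleting the factor $\phi_0$ is an energy-preserving bijection between the two standard bases.

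\textbf{The vertex operator.} The step you flag as the main obstacle does not need Schur's lemma. It is better to avoid it, since making sense of $V(z)^{-1}$ on series in $z$ runs into divergent sums. Argue directly instead.
\begin{itemize}
\item Set $X=\sigma^i\circ\phi(z)\circ(\sigma^{1-i})^{-1}$, which is meaningful by the first half.
\item The relation $[H_{-n},\phi(z)]=z^{-n}\phi(z)$ (that is, \eqref{eq:R_H_phi} with negative odd index) gives $X\circ t_n=(t_n-\tfrac{2}{n}z^{-n})\circ X$.
\item Hence $X(g)=g(t-[z^{-1}])\,X(1)$ for every polynomial $g$, where $t-[z^{-1}]$ denotes the shift $t_n\mapsto t_n-\tfrac{2}{n}z^{-n}$.
\item On the other hand, $X(1)=\langle i|e^{H_+(t)}\phi(z)|1-i\rangle=e^{\xi(t,z)}\langle i|\phi(z)|1-i\rangle=2^{-1/2}e^{\xi(t,z)}$. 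This uses $e^{H_+(t)}|1-i\rangle=|1-i\rangle$ and your normalisation computation; the case $i=1$ is the same, via $\langle 1|\phi(z)|0\rangle=2^{1/2}\langle 0|\phi_0^2|0\rangle$.
\item Since $V(z)g=e^{\xi(t,z)}g(t-[z^{-1}])$, this is exactly $X=2^{-1/2}V(z)$.
\end{itemize}
The commutation relation with $\partial_{t_n}$ is not even needed.

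\textbf{A slip in the discarded sketch.} The correct identity on $\mathcal F_B^{1-i}$ is
\[
\langle i|\phi(z)=2^{-1/2}\langle 1-i|\exp\bigl(-\textstyle\sum_{n}\tfrac{2}{n}z^{-n}H_n\bigr),
\]
with the positive modes $H_n$ and a minus sign. A dressing by the $H_{-n}$ would act trivially, because $\langle 1-i|H_{-n}=0$.
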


\section{Shifted Schur Measures in Terms of Neutral Fermions}
\label{sec:SS_measure}
A \emph{strict partition} is a strictly decreasing sequence of strictly positive integers $\lambda = (\lambda_1,\ldots,\lambda_l)$, i.e., $\lambda_1 > \dots > \lambda_l > 0$. 
We denote the set of all strict partitions by $\mathcal{D}$. For each $\lambda \in \mathcal{D}$, we define  
\begin{equation*}
    \alpha(\lambda) 
    = \begin{cases}
        0 & l(\lambda) \text{ is even},\\
        1 & l(\lambda) \text{ is odd},
    \end{cases}
\end{equation*}
where $l(\lambda)=l$ signifies the length of the partition.

Recalling that $|1\rangle=2^{\frac{1}{2}}\phi_{0}|0\rangle$, it follows that for any $\lambda\in \mathcal{D}$, the state $\phi_{\lambda_1}\cdots \phi_{\lambda_l} |\alpha(\lambda)\rangle$ lies within the even component $\mathcal{F}_{B}^{0}$ of the Fock space. 
We define the \emph{Schur $Q$-function} $Q_{\lambda}(t)$ by 
\begin{equation}
    Q_{\lambda}(t) 
    = 2^{\frac{l(\lambda)}{2}} \sigma_B^0(\phi_{\lambda_1}\cdots\phi_{\lambda_l} |\alpha(\lambda)\rangle)\rvert_{t=2t} = 2^{\frac{l(\lambda)}{2}} \langle 0 | e^{H_+(2t)}\phi_{\lambda_1}\cdots\phi_{\lambda_l} |\alpha(\lambda)\rangle,
    \label{eq:SchurQ}
\end{equation}
where the second equality is a consequence of \eqref{map_BScor}.
Furthermore, we set $P_{\lambda}(t) = 2^{-l(\lambda)}Q_{\lambda}(t)$.

A straightforward calculation using Wick's theorem yields
\begin{equation*}
    \langle 0| e^{H_{-}(2t)}\phi_{\lambda_1}\cdots\phi_{\lambda_l} |\alpha(\lambda)\rangle
    = (-1)^{|\lambda|}\langle \alpha(\lambda)|\phi_{-\lambda_l}\cdots\phi_{-\lambda_1}e^{H_{-}(2t)}|0\rangle,
\end{equation*}
where $|\lambda|=\lambda_1+\cdots+\lambda_l$ denotes the size of the partition and $H_{-}(t) = \sum_{n\ge 1,\text{odd}} t_n H_{-n}$. 
This relation implies that the value of $Q_\lambda(t)$ is proportional to the coefficient of the basis vector $\phi_{\lambda_1}\cdots\phi_{\lambda_l}|\alpha(\lambda)\rangle$ in the expansion of the state $e^{H_{-}(2t)}|0\rangle$.

The state $e^{H_{-}(2t)}|0\rangle \in \mathcal{F}_B^0$ is a \emph{coherent state}, in the sense that it is a simultaneous eigenvector of the operators $H_n\,(n\ge1)$. 
Indeed, using \eqref{eq:Hamilton}, one finds the eigenvalue relation
\begin{equation*}
    H_n e^{H_{-}(2t)}|0\rangle = nt_n e^{H_{-}(2t)}|0\rangle,
\end{equation*}
whilst the size is given by
\begin{equation*}
    \langle 0|e^{H_+(2\overline{t})}e^{H_-(2t)}|0 \rangle = e^{[H_+(2\overline{t}),H_-(2t)]} = \exp{\textstyle\left(\sum_{n\ge 1, \text{odd}}2n\, |t_n|^2\right)},
\end{equation*}
where $\overline{t}=(\overline{t}_1,\overline{t}_3,\dots)$ denotes the complex conjugate of $t$.
Consequently, the assignment
\begin{equation*}
    \mathcal{D} \ni \lambda \mapsto \frac{|\langle 0| e^{H_+(2t)}\phi_{\lambda_1}\cdots\phi_{\lambda_l} |\alpha(\lambda)\rangle|^2}{\langle 0|e^{H_+(2\overline{t})}e^{H_-(2t)}|0 \rangle}
    = \frac{P_\lambda(\overline{t})Q_\lambda(t)}{\exp{\left(\sum_{n\ge 1, \text{odd}}2n\, |t_n|^2\right)}}
\end{equation*}
defines a probability measure on $\mathcal{D}$, which we call the (Hermitian) \emph{shifted Schur measure}.
We denote this measure by $\mathbb{P}_{\rm SS}(\lambda;t) = \frac{1}{Z}P_{\lambda}(\overline{t})Q_\lambda(t)$, with the partition function $Z=\exp\left(\sum_{n\ge1,\text{odd}}2n\,|t_n|^2\right)$.
 
For a finite set $A \subset \mathbb{Z}_{>0}$, we define the \emph{correlation function}, 
\begin{equation}
    \rho(A;t) = \mathbb{P}_{\rm{SS}}(\{\lambda \in \mathcal{D} \mid A \subset \lambda \};t)
    = \frac{1}{Z} \sum_{\lambda \supset A} P_\lambda(\overline{t}) Q_{\lambda}(t).
    \label{eq:rho}
\end{equation}
In terms of the neutral fermions, $\mathbb{P}_{\rm{SS}}(\lambda;t)$ represents the probability that the fermions occupy exactly the sites corresponding to $\lambda$ in the coherent state $e^{H_+(2t)}|0\rangle$, whereas the correlation function $\rho(A;t)$ signifies the probability that at least the sites in $A$ are occupied. 
Let us define the correlation kernel $K(a,b;t)$ by the vacuum expectation 
\begin{equation*}
    K(a,b;t) = \langle 0 | e^{H_+(2\overline{t})}e^{-H_-(2t)}\phi_a\phi_be^{H_-(2t)}e^{-H_+(2\overline{t})} |0\rangle.
\end{equation*}
For a set $A=\{a_1,\ldots,a_N\}$ of size $N$, we introduce a skew-symmetric matrix $M(A;t)$ of order $2N$, whose entries are prescribed by
\begin{align*}
    M(A;t)_{i,j} =
    \begin{cases}
        K(a_i,a_j;t) & 0<i<j\le N,\\
        (-1)^{a_{2N-j+1}}K(a_i,-a_{2N-j+1};t) & 0<i\le N < j \le 2N,\\
        (-1)^{a_{2N-i+1}+a_{2N-j+1}}K(-a_{2N-i+1},-a_{2N-j+1};t) & N<i<j\le 2N.
    \end{cases}
\end{align*}

\begin{theorem}[\cite{{MR2139724},{WANG2019447}}]
\label{thm:PfPP}
The correlation function \eqref{eq:rho} is given by a Pfaffian,
\begin{equation*}
    \rho(A;t) = \mathrm{Pf} (M(A;t)),
\end{equation*}
hence the shifted Schur measure induces a \emph{Pfaffian point process}.
\end{theorem}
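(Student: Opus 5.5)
We will write the correlation function as a vacuum expectation value and then use Wick's theorem for neutral fermions. The first step is to express the occupation indicator fermionically. For $a>0$, the relation \eqref{eq:R_neu} gives $[\phi_a,\phi_{-a}]_+=(-1)^a$. Hence the operator $n_a \coloneqq (-1)^a\phi_a\phi_{-a}$ acts on the basis \eqref{eq:wedge} diagonally. Its eigenvalue is $1$ when the site $a$ is occupied and $0$ otherwise: $\phi_{-a}$ removes $e_a$ with the sign $(-1)^a$, and $\phi_a$ reinserts it. Moreover, the $n_a$ for distinct $a>0$ commute, since each is quadratic in fermions with distinct indices. Expanding the coherent state $e^{-H_+(2\overline{t})}|0\rangle$ (or its appropriate dual) in the basis $\phi_{\lambda_1}\cdots\phi_{\lambda_l}|\alpha(\lambda)\rangle$, and using the computation following \eqref{eq:SchurQ}, we obtain
\begin{equation*}
    \rho(A;t)=\frac{\langle 0|e^{H_+(2\overline{t})}\, n_{a_1}\cdots n_{a_N}\, e^{H_-(2t)}|0\rangle}{\langle 0|e^{H_+(2\overline{t})}e^{H_-(2t)}|0\rangle}.
\end{equation*}
The coefficient $P_\lambda(\overline t)Q_\lambda(t)/Z$ appears because the factor $2^{l(\lambda)}$ splits between $P$ and $Q$ via the normalisation of $|1\rangle$. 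We must also check that the $\phi_0$ component is handled correctly for odd $l(\lambda)$.

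The second step is to conjugate. We insert $1=e^{H_-(2t)}e^{-H_-(2t)}$ and $e^{-H_+(2\overline t)}e^{H_+(2\overline t)}$ around the operators. Using $e^{H_+(2\overline t)}e^{H_-(2t)}|0\rangle = Z\, e^{H_-(2t)}e^{H_+(2\overline t)}\ldots$, or more simply the fact that $e^{H_+}$ fixes $\langle0|$ up to this normalisation, the ratio becomes
\begin{equation*}
    \langle 0|\Phi_{a_1}\Phi_{-a_1}\cdots\Phi_{a_N}\Phi_{-a_N}|0\rangle \prod_i(-1)^{a_i},
\end{equation*}
where $\Phi_c \coloneqq G\phi_cG^{-1}$ and $G=e^{H_+(2\overline t)}e^{-H_-(2t)}$. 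Since $e^{\pm H_\pm}$ act on $\phi(z)$ by multiplication by exponentials, each $\Phi_c$ is a (convergent, under suitable assumptions on $t$) linear combination of the $\phi_b$. Thus $\langle0|\cdot|0\rangle$ is a quasi-free state on these operators. Its two-point functions are precisely $\langle 0|\Phi_a\Phi_b|0\rangle = K(a,b;t)$.

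The third step is to apply Wick's theorem for neutral fermions. For linear combinations of the $\phi$'s, the vacuum expectation of a product of $2N$ factors is the Pfaffian of the matrix of pairwise expectations, taken in the order of appearance. Applying this to the ordered list $\Phi_{a_1},\Phi_{-a_1},\dots,\Phi_{a_N},\Phi_{-a_N}$ gives a Pfaffian of two-point kernels. Next we reorder the list to $\Phi_{a_1},\dots,\Phi_{a_N},\Phi_{-a_N},\dots,\Phi_{-a_1}$. This is a permutation, so the Pfaffian changes by its sign. At the same time, we absorb the signs $(-1)^{a_i}$ into the columns and rows indexed by $-a_i$. The result is exactly the matrix $M(A;t)$, in which row $2N-j+1$ carries the factor $(-1)^{a_{2N-j+1}}$.

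We expect the main obstacle to be sign bookkeeping. We must verify that the permutation sign, the factors $(-1)^{a_i}$, and the sign in the Pfaffian row/column scaling combine to give exactly $\mathrm{Pf}(M)$ with no stray factor. A clean way to do this is to note that scaling row and column $k$ of a skew-symmetric matrix by $c$ multiplies its Pfaffian by $c$. With $c=(-1)^{a_i}$ used once for each $-a_i$, this cancels the prefactor, so only the reordering sign remains to be computed and checked. A secondary technical point is justifying the conjugation and convergence of the infinite sums defining $\Phi_c$. We would handle this by working with formal power series in $t$, for which all expressions are polynomial coefficientwise.
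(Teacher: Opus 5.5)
Your proposal is correct and follows the paper's route: you write $\rho(A;t)$ as the vacuum expectation of the conjugated number operators $(-1)^{a_i}\Phi_{a_i}\Phi_{-a_i}$, with $G=e^{H_+(2\overline{t})}e^{-H_-(2t)}$, and then apply Wick's theorem to obtain $\mathrm{Pf}(M(A;t))$. The reordering sign you leave open is $+1$, since the permutation from $(a_1,-a_1,\dots,a_N,-a_N)$ to $(a_1,\dots,a_N,-a_N,\dots,-a_1)$ has $N(N-1)$ inversions; the paper avoids this bookkeeping by reordering at the operator level, because the even bilinears $\Phi_{a_i}\Phi_{-a_i}$ commute.
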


To clarify the structural origin of the matrix $M(A;t)$, we observe that the correlation function admits the representation 
\begin{align*}
    \rho(A) &= \langle 0|\Phi_{a_1}(-1)^{a_1}\Phi_{-a_1}\cdots\Phi_{a_N}(-1)^{a_N}\Phi_{-a_N} |0 \rangle\\
    &= \langle 0|\Phi_{a_1}\cdots\Phi_{a_N}(-1)^{a_N}\Phi_{-a_N}\cdots(-1)^{a_1}\Phi_{-a_1}|0 \rangle,
\end{align*}
where $\Phi_a=G \phi_a G^{-1}$ with $G=e^{H_{+}(2\overline{t})}e^{-H_{-}(2t)}$ (cf. \cite[Theorem 3.3]{WANG2019447}).  
Since each $\Phi_a$ is a linear combination of neutral fermions $\{\phi_{a}\}_{a\in\mathbb{Z}}$, it naturally decomposes into creation and annihilation parts, alongside the zero mode $\phi_{0}$.
By defining the sequence of operators
\begin{equation*}
    (\eta_1,\ldots,\eta_{2N})=(\Phi_{a_1},\ldots,\Phi_{a_N},(-1)^{a_N}\Phi_{-a_N},\ldots,(-1)^{a_1}\Phi_{-a_1})
\end{equation*}
and applying Wick's theorem, we obtain
\begin{equation*}
    \rho(A;t) 
    = \sum_{\substack{\sigma \in \mathfrak{S}_{2N},\\\sigma(1)<\sigma(3)<\cdots<\sigma(2N-1),\\\sigma(2i-1)<\sigma(2i)
    }}
    \mathrm{sgn}(\sigma)\prod_{i=1}^N \langle 0|\eta_{\sigma(2i-1)}\eta_{\sigma(2i)}|0 \rangle.
\end{equation*}
The matrix $M(A;t)$ is then recovered as the skew-symmetric extension of the values $\langle 0|\eta_i\eta_j|0 \rangle$ for $i<j$.

Notably, the correlation kernel $K(a,b;t)$ admits a double contour integral expression that is amenable to asymptotic analysis.
Following the approach in \cite{MR2139724}, we introduce an auxiliary function, which we call the wave function 
\begin{equation}
    \mathbb{J}(z;t) = e^{2\xi(\overline{t},z)}e^{2\xi(t,-z^{-1})}.
\label{eq:mathbbJ}
\end{equation}
Using the transformation property $G\phi(z)G^{-1} = \mathbb{J}(z)\phi(z)$, the kernel is expressed as
\begin{align}
    K(a,b;t) &= \frac{1}{(2\pi \rm{i})^2}\oiint_{|z|>|w|} \langle 0|G\phi(z)\phi(w)G^{-1}|0 \rangle \frac{{\rm d}z {\rm d}w}{z^{a+1} w^{b+1}} \notag\\
    &= \frac{1}{(2\pi \rm{i})^2}\oiint_{|z|>|w|} \mathbb{J}(z)\mathbb{J}(w) \langle 0|\phi(z)\phi(w)|0 \rangle \frac{{\rm d}z{\rm d}w}{z^{a+1} w^{b+1}} \notag\\
    &= \frac{1}{2(2\pi \rm{i})^2}\oiint_{|z|>|w|} \mathbb{J}(z)\mathbb{J}(w) \frac{z-w}{z+w} \frac{{\rm d}z{\rm d}w}{z^{a+1} w^{b+1}},
    \label{eq:K}
\end{align}
where the final equality is a direct consequence of the anti-commutation relations \eqref{eq:R_neu}.

\section{Limit Shape Analysis}\label{sec:lim_shape}
In this section, we introduce the shifted Young diagram, an effective tool to visualise configurations of neutral fermions. 

To construct a shifted Young diagram for a strict partition $\lambda$, we take a standard Young diagram and indent its $i$-th row to the right by $i-1$ boxes. 
To facilitate our limit shape analysis, we transpose this diagram, rotate it by $135^\circ$, and re-orient it, as illustrated in \autoref{fig:YD_strict}.

\begin{figure}[thbp]
\centering
\includegraphics[width=110mm]{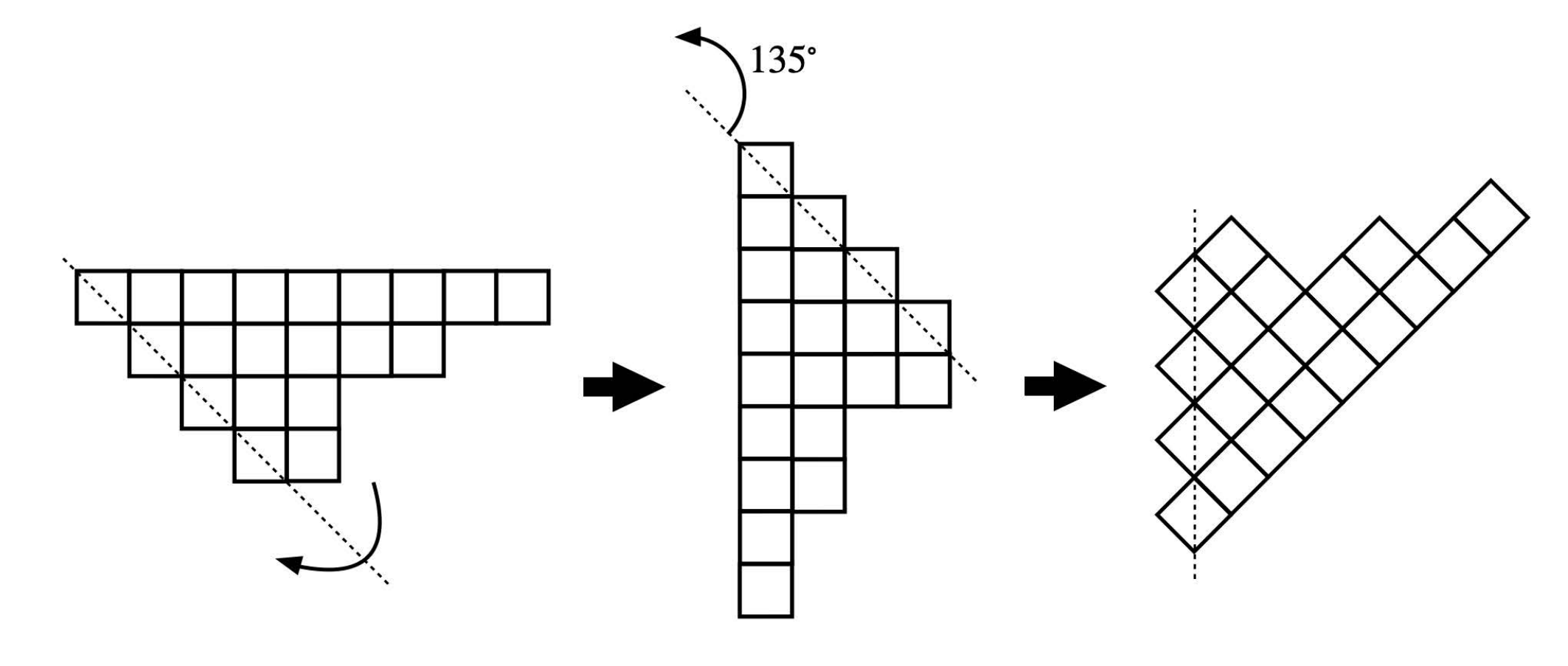}
\caption{Shifted Young diagram for the strict partition $\lambda=(9,6,3,2)$.}
\label{fig:YD_strict}
\end{figure}

We then map the resulting boundary to a sequence of black and white stones on the lattice $\mathbb{Z}_{>0}$ (the \emph{Maya diagram}) following a simple geometric rule: a white stone corresponds to a southeast-oriented boundary segment, whilst a black stone indicates a northeast-oriented one as shown in \autoref{fig:Maya}. 

\begin{figure}[thbp]
\centering
\includegraphics[width=100mm]{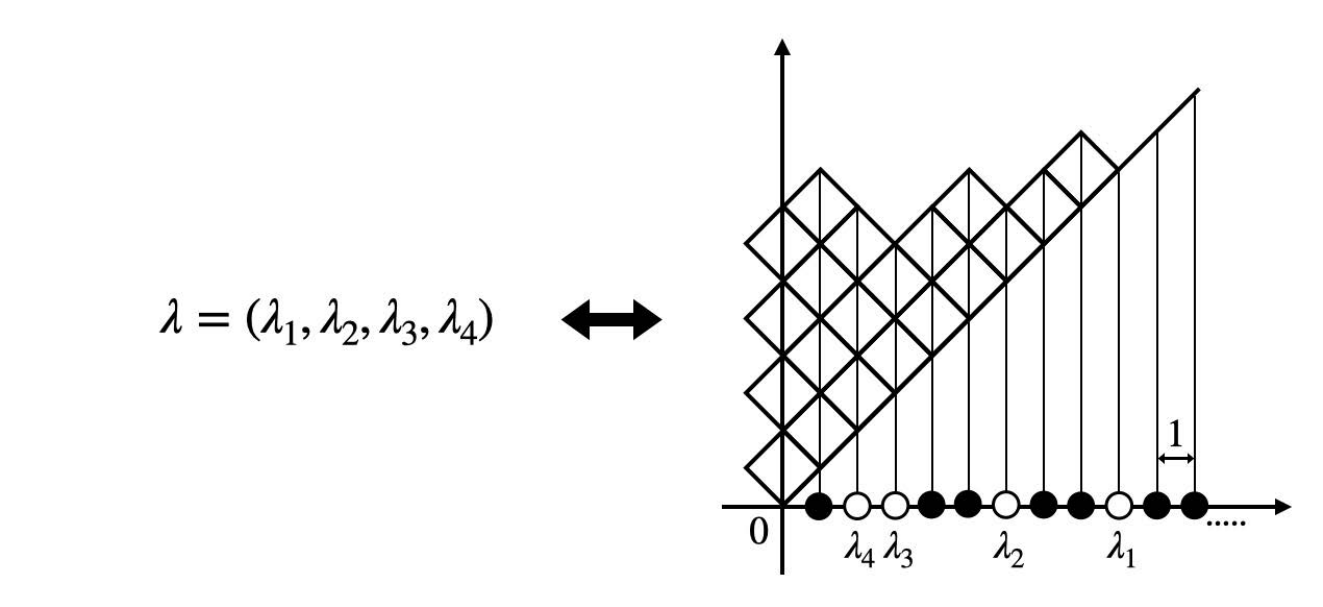}
\caption{The correspondence between a shifted Young diagram and its associated Maya diagram.}
\label{fig:Maya}
\end{figure}

By inspection, for a shifted diagram associated with a strict partition $\lambda=(\lambda_i)$, the positions of the white stones coincide exactly with the parts of the partition $\{\lambda_1,\lambda_2,\dots\}$. 
We associate these white stones with the neutral fermions. 
More formally, the stone configuration corresponds to the even state $\phi_{\lambda_1}\cdots\phi_{\lambda_{l}}|\alpha(\lambda)\rangle \in\mathcal{F}_B^{0}$.

The shifted Young diagram is central to the combinatorial definition of the Schur $Q$-function $Q_\lambda(x_1,x_2,\ldots)$. 
Under the Miwa transformation $t_n = \frac{1}{n} \sum_i x_i^n$, this combinatorial definition neatly coincides with the algebraic expression in \eqref{eq:SchurQ} (see, for instance, \cite{MR2060026}).

We now turn to the limit shape of the shifted Young diagram as the lattice mesh size $\varepsilon$ tends to zero (the scaling limit). 
Concurrently, we scale the Miwa variables $t$ by $1/\varepsilon$, ensuring the number of boxes in the diagram grows at a rate of $O(\varepsilon^{-2})$.

The profile (or height function) of a shifted Young diagram for a strict partition $\lambda$ is characterised by:
\begin{equation*}
    \psi_{\lambda,1}(x)=x+2\#\{i\mid x<\lambda_i\}, \quad x\in \mathbb{Z}_{\ge 0}.
\end{equation*}
Rescaling the box size by $\varepsilon$ yields the profile function:
\begin{equation*}
    \psi_{\lambda,\varepsilon}(x)=x+2\varepsilon\#\{i\mid x<\lambda_i\varepsilon\}, \quad x\in \varepsilon\mathbb{Z}_{\ge 0}.
\end{equation*}
Our primary interest lies in its expectation value, which can be expressed in terms of the one-point correlation function:
\begin{equation*}
    \mathbb{E}[\psi_{\lambda,\varepsilon}(x)] 
    = x + 2\varepsilon \sum_{k=1}^{\infty} \rho\left(\frac{x}{\varepsilon}+k;\frac{t}{\varepsilon}\right).
\end{equation*}
This expectation is taken with respect to the scaled shifted Schur measure $\mathbb{P}_{\rm SS}^{\varepsilon}(\lambda) = \mathbb{P}_{\rm SS}(\lambda;t/\varepsilon)$.

\subsubsection*{Constraints on the Miwa Variables}

To ensure a well-defined limit shape with specific critical behaviour, we assume that the Miwa variables $t$ satisfy the following conditions:

\begin{enumerate}[label=(\roman*)]
    \item \textbf{Real-valuedness:} $t_1, t_3, \dots$ are all real numbers.
    
    \item \textbf{Finite support:} $t_n = 0$ except for a finite number of $n$.
    
    \item \textbf{$p$-Multicriticality:} There exists a positive constant $a > 0$ such that the function 
    \begin{equation*}
        \varphi(\theta) = 4\sum_{n>0, \text{odd}} t_n \sin n\theta - a\theta, 
    \end{equation*}
    possesses a Taylor expansion at $\theta = 0$ of the form 
    \begin{equation}
        \varphi(\theta) = -\frac{\theta^{p+1}}{p+1} + O(\theta^{p+3}), 
        \label{eq:multicritical}
    \end{equation}
    for a given positive even integer $p$.
    
    \item \textbf{Non-degeneracy:} The variables satisfy
    \begin{equation*}
        4\sum_{n>0, \text{odd}} n^2 t_n \sin n\theta > 0 \quad \text{for all } \theta \in (0, \pi).
    \end{equation*}
\end{enumerate}
%\item[(iv)' Non-degeneracy for Soft Edge] $\varphi'(\theta)=0 \Leftrightarrow \theta=2n\pi \;(n\in\mathbb{Z})$
%We call these conditions on the parameters the ($p$-)multicritical condition. 

These constraints mirror the conditions established in \cite{MR4693927} for the multicritical Schur measure. 
In particular, condition (iii) represents a system of $p/2$ linear equations and a linear inequality in $t$. 
By choosing $t_1,t_3,\ldots,t_{p-1}$ to satisfy these equations and setting $t_{p+1}=t_{p+3}=\cdots=0$, the variables naturally satisfy conditions (i)–(iv). 
We refer to this specific configuration as the \emph{minimal $p$-multicritical shifted Schur measure}. 
For instance, when $p=2$, the choice $t_1=1/2$, $t_{n>1}=0$, and $a=2$ neatly satisfies all the above criteria.

Before proceeding to the limit shape analysis, we outline several consequences of conditions (i)--(iv) that will be essential for our later derivations.

\subsection*{Consequences of the Constraints}

First, assume the Miwa variables $t$ satisfy conditions (i)–(iii). 
The Taylor expansion \eqref{eq:multicritical} implies that $\varphi^{(k)}(0)=0$ for $k=0,1,\dots p$. 
Specifically, the condition $\varphi'(0)=0$ leads to the identity:
\begin{equation*}
    a = 4\sum_{n:\text{odd}} n t_n,
\end{equation*}
which subsequently yields $\varphi'(\pi)=-2a$. 
If the non-degeneracy condition (iv) also holds, $\varphi'(\theta)$ decreases monotonically from $\varphi'(0)=0$ to $\varphi'(\pi)=-2a$ (assuming a multicritical setting with a fixed $a$). 
Moreover, there exists a constant $C>1$ such that
\begin{equation}
    \varphi'(\theta) \le -\frac{\theta^p}{C}\quad\text{on} -\pi \le \theta \le \pi.
    \label{eq:C}
\end{equation}

Next, consider the case where $t$ satisfies (i), (ii), and (iv). 
We define the function $D(\theta)=4\sum_{n:\text{odd}}nt_n\cos{n\theta}$. 
Since $D(\theta)$ decreases monotonically from $D(0)=b$ to $D(\pi)=-b$, where
\begin{equation*}
    b\coloneqq 4\sum_{n:\text{odd}}nt_n > 0,
\end{equation*}
it follows that for any $x\in[-b,b]$, there exists a unique $\chi(x)\in [0,\pi]$ such that
\begin{equation}
D(\chi(x))=x.
\label{eq:chi}
\end{equation}
From this point onward, we take conditions (i) and (ii) as our baseline assumptions for the Miwa variables $t$.

\subsection*{Integral Representation of the Profile Function}

With these preparations, we return to the expectation value of the profile function.
The Pfaffian expression for the correlation function (Theorem \ref{thm:PfPP}) allows us to write the scaled one-point function as follows,
\begin{align} \label{eq:rho_ontheway}
    \rho\left(\frac{x}{\varepsilon}+k;\frac{t}{\varepsilon}\right)
    &= \frac{(-1)^{x/\varepsilon+k}}{2(2\pi\rm{i})^2}\oiint_{|z|>|w|}\mathbb{J}(z)\mathbb{J}(w)\frac{z-w}{z+w}\frac{{\rm d}z{\rm d}w}{z^{x/\varepsilon+k+1}w^{-x/\varepsilon-k+1}} \\
    &= \frac{(-1)^{x/\varepsilon+k}}{2(2\pi\rm{i})^2}\oiint_{|z|>|w|} e^{\frac{1}{\varepsilon}[S(z,x)+S(w,-x)]}\frac{z-w}{z+w}\frac{{\rm d}z{\rm d}w}{z^{k+1}w^{-k+1}}, \notag  
\end{align}
where the action function $S(z,x)$ is defined by
\begin{equation}
    S(z,x)=\xi(2t,z)+\xi(2t,-z^{-1})-x\log z.
\label{eq:S(z,x)}
\end{equation}
Utilising the identity $e^{S(w,-x)/\varepsilon}=e^{-S(-w,x)/\varepsilon} (-1)^{x/\varepsilon}\,\,(x/\varepsilon \in \mathbb{Z})$, the expression \eqref{eq:rho_ontheway} simplifies to
\begin{align}
    \rho\left(\frac{x}{\varepsilon}+k; \frac{t}{\varepsilon}\right)
    &= \frac{(-1)^{k}}{2(2\pi\rm{i})^2}\oiint_{|z|>|w|} e^{\frac{1}{\varepsilon}[S(z,x)-S(-w,x)]}\frac{z-w}{z+w}\frac{{\rm d}z{\rm d}w}{z^{k+1}w^{-k+1}}  \notag\\
    &= \frac{1}{2(2\pi\rm{i})^2}\oiint_{|z|>|w|} e^{\frac{1}{\varepsilon}[S(z,x)-S(w,x)]}\frac{z+w}{z-w}\frac{{\rm d}z{\rm d}w}{z^{k+1}w^{-k+1}}. 
    \label{eq:chamu}
\end{align}
Finally, by summing over $k \in \{ 1,2,\dots \}$, we arrive at a double contour integral representation for the expectation of the profile function:
\begin{equation}
2 \varepsilon \sum_{k=1}^{\infty} \rho\left(\frac{x}{\varepsilon}+k ;\frac{t}{\varepsilon}\right)
=\frac{\varepsilon}{(2\pi\rm{i})^2}\oiint_{|z|>|w|}e^{\frac{1}{\varepsilon}[S(z,x)-S(w,x)]}\frac{z+w}{z(z-w)^2}{\rm d}z{\rm d}w.
\label{eq:profile}
\end{equation}

The following theorem characterises the deterministic limit of the profile function under the bulk scaling limit.

\begin{theorem}
\label{thm:limit_shape}
    Let $b = 4\sum_{n:\text{odd}}nt_n$.
    Suppose the Miwa variables $t$ satisfy the conditions (i),(ii) and (iv). Then, we have 
    \begin{equation}
        \lim_{\varepsilon \to 0} 2\varepsilon\sum_{k=1}^{\infty} \rho\left(\frac{x}{\varepsilon}+k; \frac{t}{\varepsilon}\right) 
        =
        \begin{cases}
            \frac{2}{\pi}\int_0^{\chi(x)}(D(\theta)-x){\rm d}\theta & 0\le x\le b,\\
            0 & b \le x. 
        \end{cases}
        \label{eq:rho_limit}
    \end{equation}
    It follows that the limit shape of the shifted Young diagram is described by the deterministic function
    \begin{equation*}
        \lim_{\varepsilon\to 0} \mathbb{E}[\psi_{\lambda,\varepsilon}(x)] = 
        \begin{cases}
        x+\frac{2}{\pi}\int_0^{\chi(x)}(D(\theta)-x){\rm d}\theta & 0\le x\le b\\
        x & b\le x.
        \end{cases}
    \end{equation*}
\end{theorem}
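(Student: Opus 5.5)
Starting from the double contour representation \eqref{eq:profile}, I will run a steepest-descent analysis of the action $S(z,x)$ from \eqref{eq:S(z,x)} on unit-modulus contours. On $z=e^{\mathrm{i}\theta}$ with real $t$ (condition (i)),
\[
\xi(2t,e^{\mathrm{i}\theta})+\xi(2t,-e^{-\mathrm{i}\theta}) = 2\sum_{n\ \mathrm{odd}} t_n\bigl(e^{\mathrm{i}n\theta}-e^{-\mathrm{i}n\theta}\bigr) = 4\mathrm{i}\sum_{n\ \mathrm{odd}} t_n\sin n\theta ,
\]
so $\operatorname{Re}S(e^{\mathrm{i}\theta},x)=0$. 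Moreover $\partial_\theta \operatorname{Im} S(e^{\mathrm{i}\theta},x)=D(\theta)-x$. Thus the critical points of $S(\cdot,x)$ on the unit circle are $e^{\pm\mathrm{i}\chi(x)}$ for $|x|<b$, by \eqref{eq:chi} and the monotonicity guaranteed by (iv). For $x>b$ there are no critical points on the circle.

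The key steps are as follows.

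\textbf{Local structure and contour deformation.} First I record $\partial_r \operatorname{Re} S(re^{\mathrm{i}\theta},x)|_{r=1}=D(\theta)-x$. This follows from the Cauchy--Riemann equations, or directly from $r\partial_r\operatorname{Re}S=\partial_\theta\operatorname{Im}S$. Hence $\operatorname{Re}S$ increases outward exactly on the arc $|\theta|<\chi(x)$. Starting from $|z|=1+\delta>|w|=1-\delta$, I deform the $z$-contour so that $\operatorname{Re}S(z,x)<0$ away from the critical points: it should pass inside the unit circle near the arc $|\theta|<\chi$ and outside elsewhere. The $w$-contour is deformed symmetrically so that $\operatorname{Re}S(w,x)>0$. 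The two contours then cross at $e^{\pm\mathrm{i}\chi(x)}$. Finite support (ii) makes $S$ a Laurent polynomial plus a logarithm, so all deformations stay in $\mathbb{C}^*$ and only the double pole at $z=w$ is crossed.

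\textbf{Residue extraction.} Swapping the contours over the arc $\{e^{\mathrm{i}\theta}:|\theta|<\chi(x)\}$ picks up the residue of the integrand in $z$ at $z=w$. Because the pole is double, this residue is
\[
\partial_z\!\left[e^{(S(z,x)-S(w,x))/\varepsilon}\frac{z+w}{z}\right]_{z=w} = \frac{2S'(w,x)}{\varepsilon}+\frac{1}{w}\cdot(\text{bounded}) .
\]
After the prefactor $\varepsilon/(2\pi\mathrm{i})^2$, the leading contribution is
\[
\frac{1}{2\pi \mathrm{i}}\int_{\text{arc}}2S'(w,x)\,\mathrm{d}w + O(\varepsilon).
\]
Parametrising $w=e^{\mathrm{i}\theta}$ gives $S'(w)\,\mathrm{d}w=\mathrm{i}(D(\theta)-x)\,\mathrm{d}\theta$, since $\partial_\theta S(e^{\mathrm{i}\theta},x)=\mathrm{i}(D(\theta)-x)$ on the circle. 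The contribution is therefore
\[
\frac{1}{2\pi}\int_{-\chi}^{\chi}2(D-x)\,\mathrm{d}\theta=\frac{2}{\pi}\int_0^{\chi(x)}(D(\theta)-x)\,\mathrm{d}\theta ,
\]
using that $D$ is even. For $x\ge b$ the arc is empty ($\chi=0$) and no residue arises. The orientation and sign conventions need to be checked against the $|z|>|w|$ prescription.

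\textbf{Remainder.} The leftover double integral over the deformed contours carries $\operatorname{Re}(S(z)-S(w))<0$ except near $e^{\pm\mathrm{i}\chi}$. Near those points the quadratic vanishing of $S-S(e^{\pm\mathrm{i}\chi})$, with $D'(\chi)\neq 0$ by (iv), gives Gaussian decay on scale $\sqrt\varepsilon$. The prefactor $\varepsilon$ has to beat the singularity $(z-w)^{-2}$, which is integrable only in a principal-value sense at the crossing points.

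\textbf{Main obstacle.} I expect the hardest step to be this remainder estimate at the crossing points. I would first try to add and subtract the residue-type term so as to reduce $(z-w)^{-2}$ to $(z-w)^{-1}$. Then I would locally rescale $z,w=e^{\pm\mathrm{i}\chi}+\sqrt{\varepsilon}(\cdot)$, which should give a bound of $O(\sqrt\varepsilon)$. Separately, the boundary cases $x=b$ (where $D'(0)=0$ and the critical point is degenerate) and $x$ near $0$ (where the contours approach $\theta=\pi$ symmetrically) need care. These can be handled by continuity and monotonicity of the profile in $x$, since both sides of \eqref{eq:rho_limit} are continuous and nonincreasing in $x$. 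Finally, adding $x$ gives the stated formula for $\lim\mathbb{E}[\psi_{\lambda,\varepsilon}(x)]$.
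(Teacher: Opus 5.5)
Your proposal is correct and follows the paper's proof. It uses the same deformation to contours crossing at $e^{\pm\mathrm{i}\chi(x)}$, collects the same residue $\frac{2}{\varepsilon}S'(w,x)-\frac{1}{w}$ along the arc $|\theta|<\chi(x)$, and obtains the same leading term; your orientation check comes out right.

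The only difference is in the remainder estimate. The paper differentiates in $1/\varepsilon$ to bring down the factor $S(z,x)-S(w,x)=O(z-w)$ and then applies dominated convergence. Your plan instead subtracts $2/(z-w)^2$, which integrates to zero over the closed $z$-contour. Combined with $|e^{(S(z,x)-S(w,x))/\varepsilon}|\le 1$, this already gives an $O(\varepsilon\log\frac{1}{\varepsilon})$ bound, so the local $\sqrt{\varepsilon}$ rescaling is not needed.

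Your monotonicity argument at $x=b$ usefully covers a degenerate case the paper passes over. At $x=0$ no special care is needed, since $\chi(0)=\pi/2$ is a nondegenerate crossing.
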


To gain further intuition, let us consider specific choices for the Miwa variables.

\begin{example}
Consider the standard case where $t_1=1/2, t_3=t_5=\cdots=0$ and $a=b=2$.
In this setting, the relation \eqref{eq:chi} yields $\chi(x)=\arccos{x/2}$. 
A straightforward integration then yields the limit shape:
\begin{equation*}
    \lim_{\varepsilon\to 0} \mathbb{E}[\psi_{\lambda,\varepsilon}(x)] = 
    \begin{cases}
        x+\frac{2}{\pi}(\sqrt{4-x^2}-x\arccos{\frac{x}{2}}) & 0\le x\le 2\\
        x & 2\le x
    \end{cases}
\end{equation*}
which is the Logan-Shepp-Vershik-Kerov curve.

More generally, we can investigate higher multicritical regimes. The limit shapes and the corresponding fermion densities for the minimal $p$-multicritical measures ($p=2,4,\dots,20$) are visualised in \autoref{fig:Density_Shape}.

\begin{figure}[tbp]
    \centering

    % 左の図
    \begin{subfigure}[t]{0.48\textwidth} % 少し幅を広げても大丈夫
        \centering
        \begin{tikzpicture}
            % 画像
            \node[anchor=south west, inner sep=0] (img1) at (0,0) {%
                \includegraphics[height=18em]{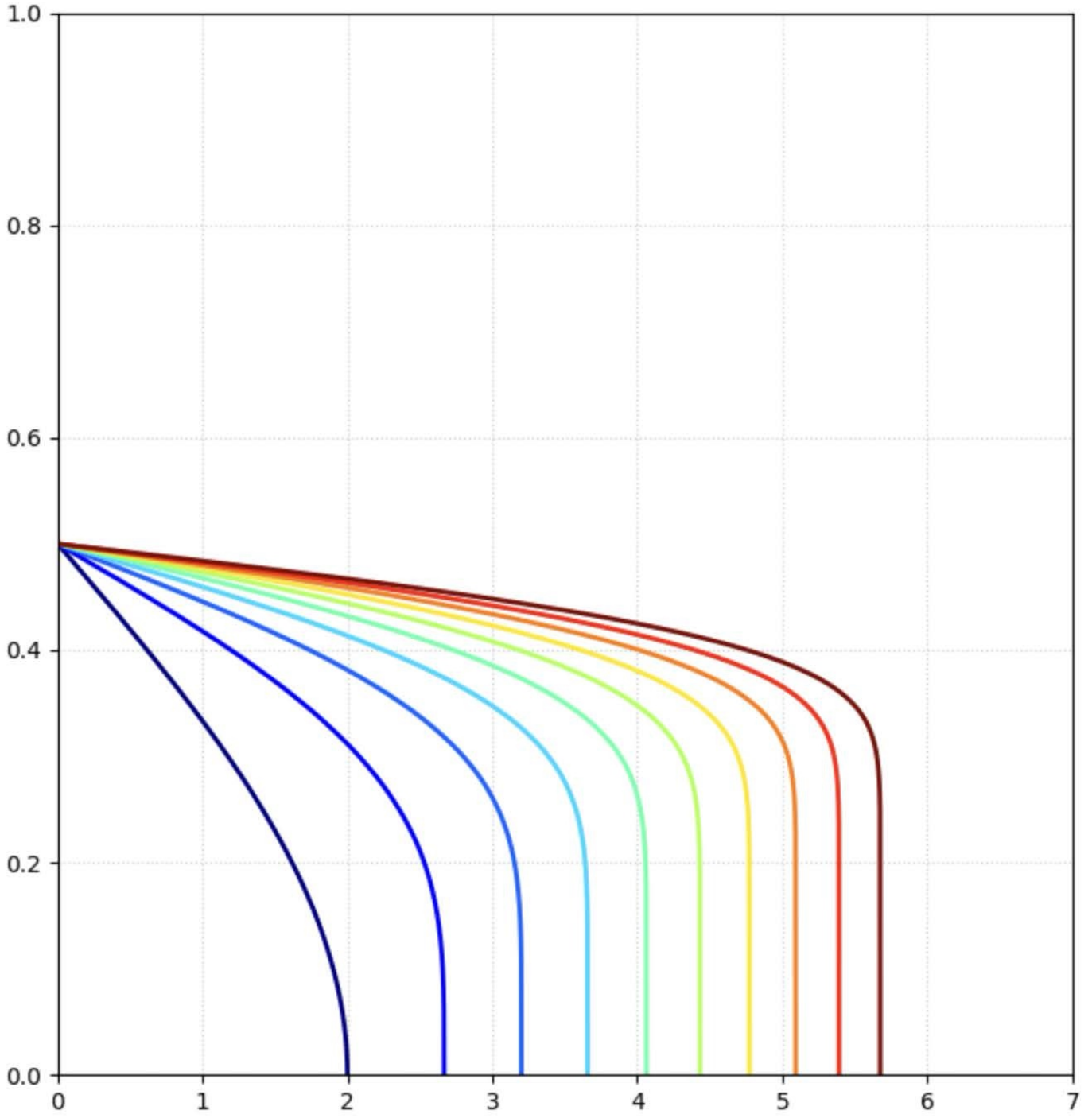}%
            };

            % 相対座標系
            \begin{scope}[x={(img1.south east)}, y={(img1.north west)}]
                % x軸ラベル 
                \node[below] at (0.45, 0) {$x$};
                % y軸ラベル 
                \node[left, rotate=90, anchor=south] at (0, 0.5) {$\chi/\pi$};
            \end{scope}
        \end{tikzpicture}
        \caption{Density function in the scaling limit. }
    \end{subfigure}
    \hfill % 図の間の空行を消す
    % 右の図
    \begin{subfigure}[t]{0.48\textwidth}
        \centering
        \begin{tikzpicture}
            % 画像
            \node[anchor=south west, inner sep=0] (img2) at (0,0) {%
                \includegraphics[height=18em]{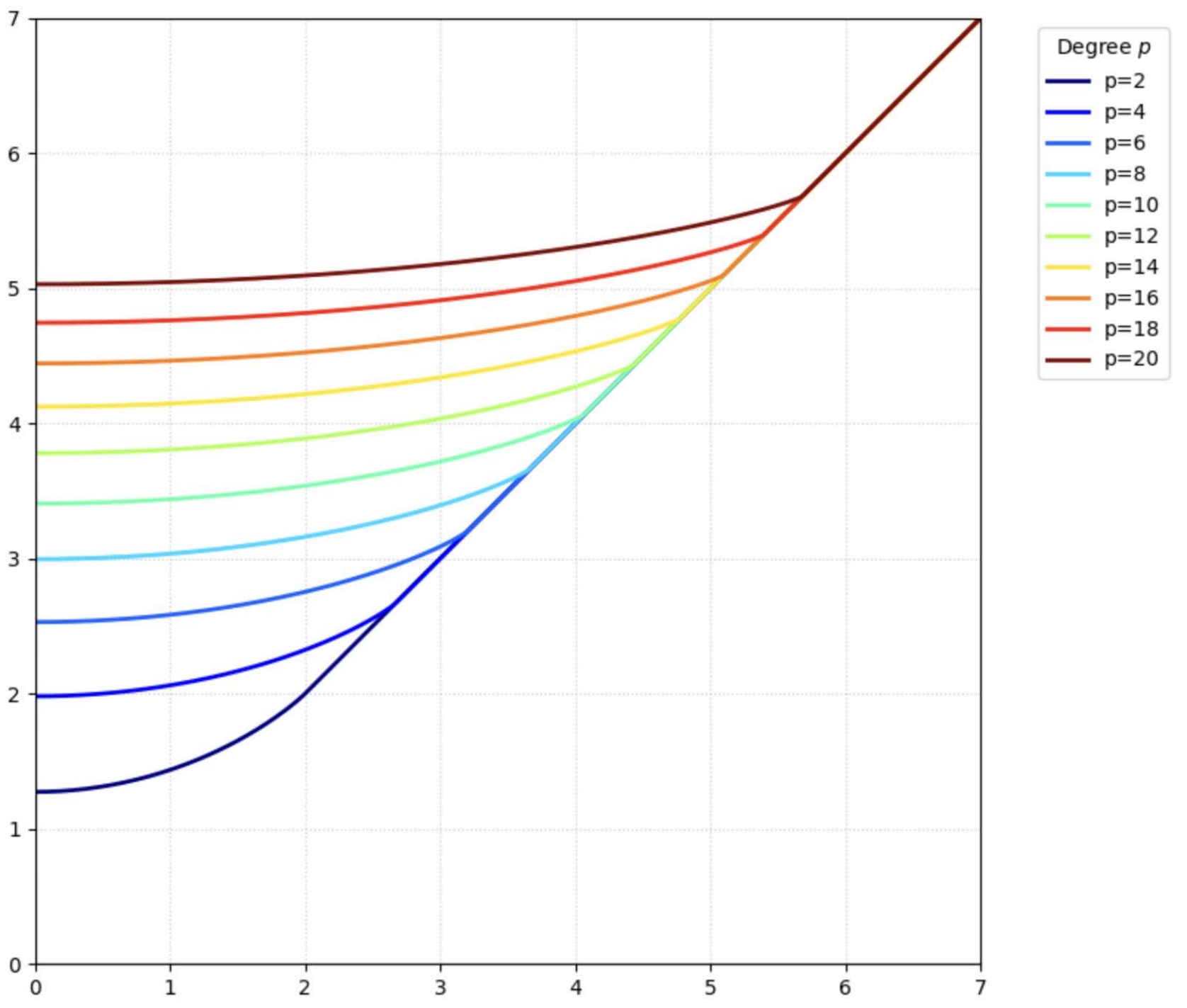}%
            };

            % 相対座標系
            \begin{scope}[x={(img2.south east)}, y={(img2.north west)}]
                % x軸ラベル
                \node[below] at (0.45, 0) {$x$};
                % y軸ラベル
                \node[left, rotate=90, anchor=south] at (0, 0.5) {$\displaystyle \lim_{\varepsilon\to 0} \mathbb{E}[\psi_{\lambda,\varepsilon}(x)]$};
            \end{scope}
        \end{tikzpicture}
        \caption{The limit shape for minimal $p$-multicritical measures.}
    \end{subfigure}

    \vspace{1em} % 全体キャプションとの間に少し隙間を作る
    \caption{Density and Limit Shape}
    \label{fig:Density_Shape}
\end{figure}

\end{example}

\begin{proof}[Proof of \autoref{thm:limit_shape}] 
We restrict our attention to the case $0\le x\le b$, as the logic for $x\ge b$ follows naturally from identical arguments.

We deform the contour of integration in \eqref{eq:profile} so that the integral vanishes as $\varepsilon \to 0$. 
This deformation inevitably picks up a residue from the $\frac{1}{(z-w)^2}$ pole in the integrand, which precisely yields the right-hand side of \eqref{eq:rho_limit}.

Differentiating $S(re^{{\rm i}\theta},x)$ with respect to $r$ at $r=1$ yields
\begin{equation*}
        \operatorname{Re}S(re^{{\rm i}\theta},x) = (r-1)[D(\theta)-x] + O((r-1)^2). 
\end{equation*}
Thus, by deforming the integration contours to $c_z$ and $c_w$ as are illustrated in \autoref{fig:Path_c_zw},
one can ensure that $\operatorname{Re}[S(z,x)-S(w,x)] < 0$ everywhere except at the points $z,w=e^{\pm {\rm i}\chi(x)}$.  
    
We carry out this deformation as follows. 
Starting from the integration paths $\abs{z}\equiv\text{const.}>1$ and $\abs{w}\equiv1$, we rewrite the integral \eqref{eq:profile} as a repeated integral $\int{\rm d}w\int{\rm d}z \cdots$. 
First, we deform the path for $z$ to $c_z$.
We partition the path for $w$ into $c_{\chi}$ and $c'_{\chi}$, as illustrated in \autoref{fig:Path_c_chi}. 
\begin{figure}[thbp]
    \centering
    \includegraphics[width=59mm]{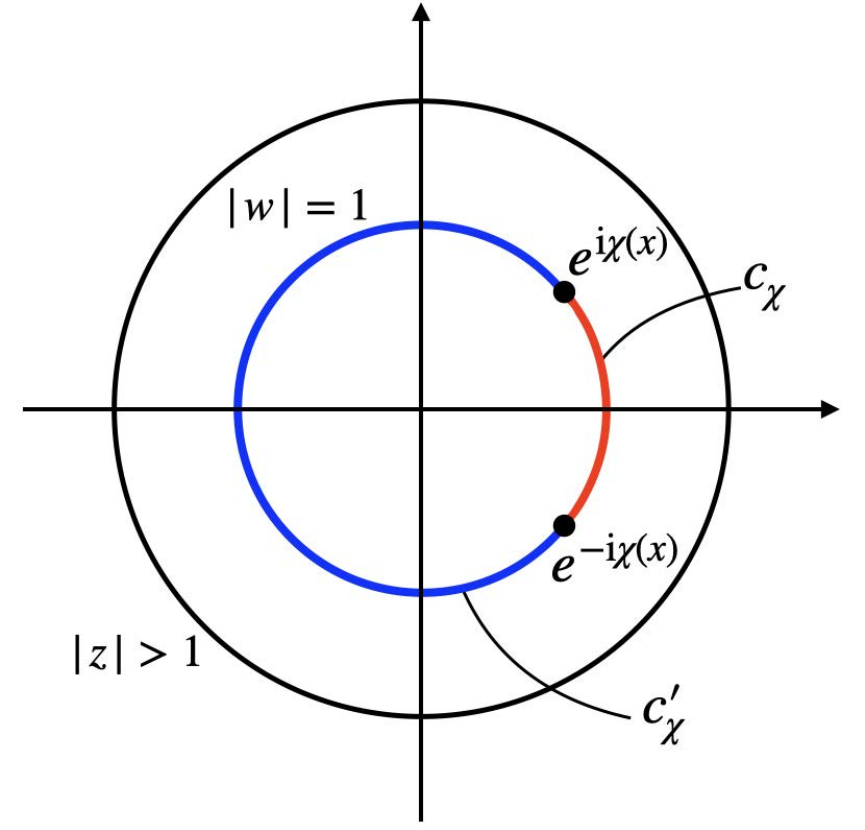}
    \caption{$c_{\chi}$ and $c'_{\chi}$}
    \label{fig:Path_c_chi}
    \end{figure}
When $w \in c_{\chi}$, the contour for $z$ is forced to cross the point $w$, which incurs a residue (see \autoref{fig:Path_cz}):
\begin{equation*}
    \operatorname*{Res}_{z=w} \left\{e^{\frac{1}{\varepsilon} S(z,x)-S(w,x)} \frac{z+w}{z(z-w)^2} \right\}
    = \frac{2}{\varepsilon} S'(w,x) -\frac{1}{w}.
    \end{equation*}
Conversely, when $w \in c'_{\chi}$, no such obstruction occurs.   
    
    \begin{figure}[thbp]
    \centering
    \begin{subfigure}[b]{0.40\columnwidth}
    \centering
    \includegraphics[width=0.9\columnwidth]{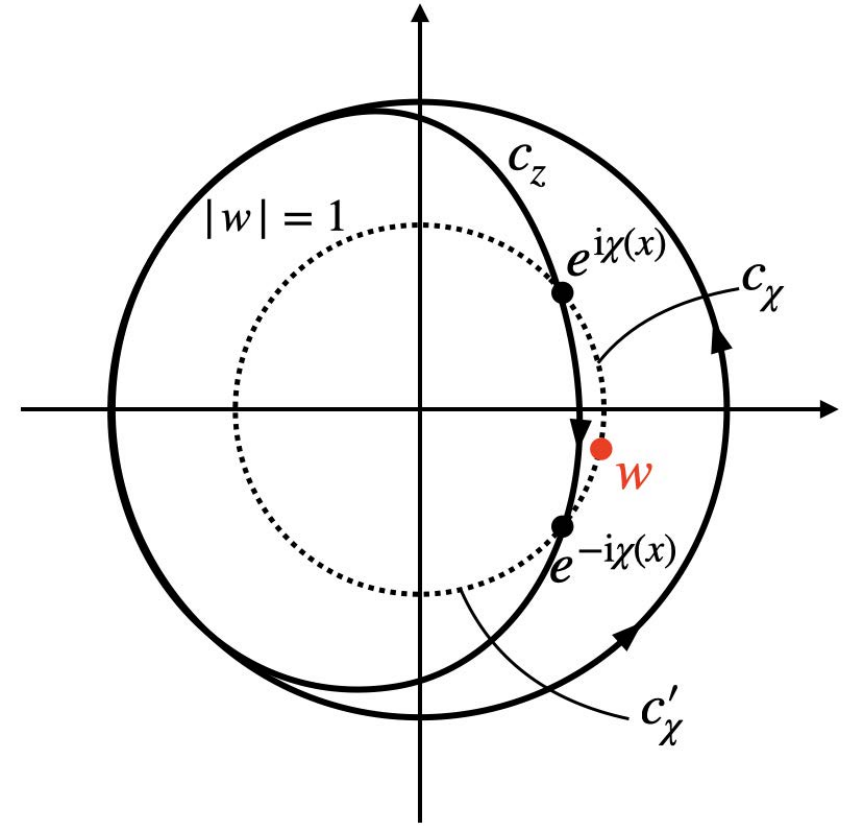}
    \caption{$w \in c_{\chi}$}
    \label{fig:Path_cz_c_chi}
    \end{subfigure}
    \begin{subfigure}[b]{0.40\columnwidth}
    \centering
    \includegraphics[width=0.9\columnwidth]{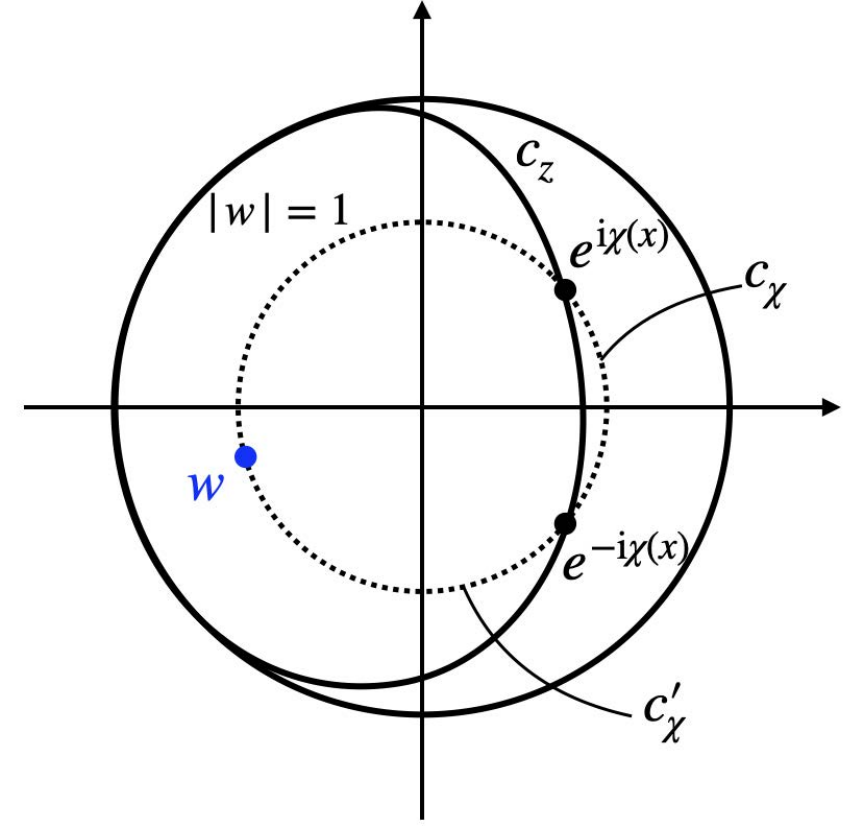}
    \caption{$w \in c'_{\chi}$}
    \label{fig:Path_cz_c'_chi}
    \end{subfigure}
    \caption{Deformation of $c_z$.}
    \label{fig:Path_cz}
    \end{figure} 

With the deformation for $z$ complete, we subsequently deform the path for $w$ to $c_w$ as shown in \autoref{fig:Path_c_zw}. 
This step proceeds without obstruction. 
    
    \begin{figure}[thbp]
    \centering
    \includegraphics[width=59mm]{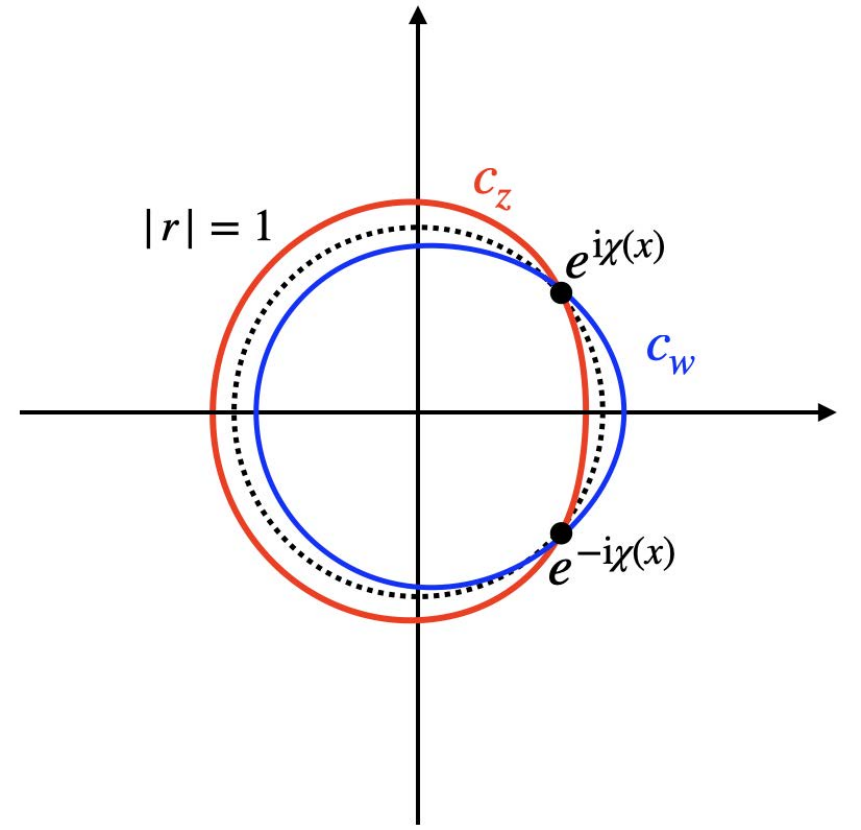}
    \caption{$c_{z}$ and $c_{w}$}
    \label{fig:Path_c_zw}
    \end{figure}

Summarizing the deformation procedure, we obtain 
\begin{align*}
     2 \sum_{k=1}^{\infty} \rho\left(\frac{x}{\varepsilon}+k ;\frac{t}{\varepsilon}\right)
        &=\frac{1}{(2\pi\rm{i})^2}\oiint_{|z|>|w|}e^{\frac{1}{\varepsilon}[S(z,x)-S(w,x)]}\frac{z+w}{z(z-w)^2}{\rm d}z{\rm d}w  \notag\\
        &= \frac{1}{2\pi\rm{i}}\int_{w\in c_\chi}
        \operatorname*{Res}_{z=w}
        e^{\frac{1}{\varepsilon}[S(z,x)-S(w,x)]}\frac{z+w}{z(z-w)^2}{\rm d}w \notag\\
        &\qquad \qquad + \frac{1}{(2\pi\rm{i})^2}\oiint_{z\in c_z,w\in c_w}e^{\frac{1}{\varepsilon}[S(z,x)-S(w,x)]}\frac{z+w}{z(z-w)^2}{\rm d}z{\rm d}w  \notag\\
        &= \frac{1}{2\pi \rm{i}}\int_{c_\chi}\left(\frac{2}{\varepsilon}S'(w,x) - \frac{1}{w}\right){\rm d}w \notag\\
        &\qquad \qquad + \frac{1}{(2\pi\rm{i})^2}\oiint_{z\in c_z,w\in c_w}e^{\frac{1}{\varepsilon}[S(z,x)-S(w,x)]}\frac{z+w}{z(z-w)^2}{\rm d}z{\rm d}w. 
    \end{align*}
   
It follows readily that 
\begin{align*}
    \lim_{\varepsilon\to 0}\frac{\varepsilon}{2\pi \rm{i}}\int_{c_\chi}\left(\frac{2}{\varepsilon}S'(w,x) - \frac{1}{w}\right){\rm d}w 
    &= \frac{2}{\pi}\int_{0}^{\chi(x)}(D(\theta)-x){\rm d}\theta. 
\end{align*}
So our goal is to show 
\begin{equation*}
    \lim_{\varepsilon\to0} \varepsilon\oiint_{z \in c_{z},w \in c_{w}} e^{\frac{1}{\varepsilon}[S(z,x)-S(w,x)]} \frac{z+w}{z(z-w)^2} {\rm{d}}z {\rm{d}}w = 0.
\end{equation*}
Since the integral is analytic in $1/\varepsilon$, the left-hand side, if it exists, is equal to 
\begin{equation*}
    \lim_{\varepsilon \to 0} \frac{\rm d}{{\rm d}(1/\varepsilon)} \oiint_{z \in c_{z},w \in c_{w}} e^{\frac{1}{\varepsilon}[S(z,x)-S(w,x)]} \frac{z+w}{z(z-w)^2} {\rm{d}}z {\rm{d}}w.
\end{equation*}
Differentiating under the integral sign, it suffices to show 
\begin{equation}
    \lim_{\varepsilon \to 0} \oiint_{z \in c_{z},w \in c_{w}} [S(z,x)-S(w,x)] e^{\frac{1}{\varepsilon}[S(z,x)-S(w,x)]} \frac{z+w}{z(z-w)^2} {\rm{d}}z {\rm{d}}w =0.
\label{eq:sufficient}
\end{equation}

Because $S(z,x)$ and $S(w,x)$ are multi-valued, we divide the paths $c_z$ and $c_w$ into sub-arcs $c_z^i$ and $c_w^j$ to establish single-valued branches. 
We select these branches such that their arguments coincide at the intersection points $z=w=e^{\pm{\rm i}\chi(x)}$. 
Consequently, the kernel $[S(z,x)-S(w,x)]\frac{1}{(z-w)^2}$ exhibits a simple pole of order $1$ at $z=w$, rendering it integrable within the iterated integral.
    %Let the latter term be denoted by 
    %\begin{equation}
    %    F(\zeta)=\frac{1}{(2 \pi \rm{i})^2} \oiint_{z \in c_{z},w \in c_{w}} e^{\zeta[S(z,x)-S(w,x)]} \frac{z+w}{z(z-w)^2} {\rm{d}}z {\rm{d}}w.
    %\end{equation}
    %By the definiton of $S(z,x)$ in (\ref{eq:S(z,x)}), this integral is single-valued if $\zeta=1/\varepsilon$ and $x/\varepsilon \in \mathbb{Z}$; otherwise, it is multi-valued. We therefore decompose the paths $c_{z}$ and $c_{w}$ into sub-arcs along which the argument remains constant, and perform the integration with the argument fixed. 
    
   % \begin{proposition} 
    %    $\lim_{\zeta \to \infty} F'(\zeta)=0.$ Especially,
     %   $\lim_{\varepsilon\to+0} \varepsilon F(\frac{1}{\varepsilon})=0.$
      %  \label{prop:F_zeta}
    %\end{proposition}
    %\begin{proof}
    %    Without loss of generality, it sufficies to consider $\zeta=1/\varepsilon \in\mathbb{R}$. 
    %    Let $\{c_{z}^{i}\}_i$ and $\{c_{w}^{j}\}_j$ be sub-arcs of $c_{z}$ and $c_{w}$, respectively.

\case{1} $c_{z}^{i} \cap c_{w}^{j}=\emptyset$.
Let 
\begin{equation*}
    M\coloneqq \sup_{z\in c_{z}^{i},w \in c_{w}^j} \abs{\frac{z+w}{z(z-w)^2}} <\infty.
\end{equation*}
Since $\operatorname{Re} [S(z,x)-S(w,x)]<0$, it follows that
\begin{equation*}
    \abs{
    [S(z,x)-S(w,x)] e^{\frac{1}{\varepsilon}[S(z,x)-S(w,x)]} \frac{z+w}{z(z-w)^2}
    }  
    \le
    \norm{S(z,x)-S(w,x)}_{\infty} \cdot M.
\end{equation*}
Moreover, as $\varepsilon \to +0$,
\begin{equation*}
    \abs{e^{\frac{1}{\varepsilon} [S(z,x)-S(w,x)]}} 
    \le
    e^{\frac{1}{\varepsilon} \operatorname{Re}[S(z,x)-S(w,x)]}
    \to 0. 
\end{equation*}
Therefore, by the bounded convergence theorem, the integral \eqref{eq:sufficient} over these sub-arcs vanishes.

\case{2} $c_{z}^{i} \cap c_{w}^{j} \ne \emptyset$.
Without loss of generality, let us assume $c_{z}^{i} \cap c_{w}^{j} = \{e^{{\rm i}\chi(x)}\}$. 
We parametrise each sub-arc as follows: 
\begin{equation*}
    \begin{cases}
        c_{z}^{i}: z=z(t), & z(0)=e^{{\rm{i}}\chi(x)} \quad (t_{0} \le t \le t_{1}),   \\
        c_{w}^{j}: w=w(s), & w(0)=e^{{\rm{i}}\chi(x)} \quad (s_{0} \le s \le s_{1}).
    \end{cases}
\end{equation*}
Since $S(z(t),x)-S(w(s),x)=O(z(t)-w(s))$, there exists $M>0$ such that
\begin{equation*}
    \abs{[S(z(t),x)-S(w(s),x)]\frac{z(t)+w(s)}{z(t)(z(t)-w(s))^2}} \le \frac{M}{|t|+|s|}.
\end{equation*}
This implies that the integrand is bounded by an $L^1(c_{z}^{i} \times c_{w}^{j})$ function. 
Meanwhile, we have
\begin{equation*}
    \abs{e^{\frac{1}{\varepsilon} [S(z,x)-S(w,x)]}} 
    \le
    e^{\frac{1}{\varepsilon} \operatorname{Re}[S(z,x)-S(w,x)]}
    \to 0 \quad \text{a.e.}
\end{equation*}
Invoking Lebesgue's dominated convergence theorem, we obtain
\begin{equation*}
    \lim_{\varepsilon\to0}
    \frac{1}{(2\pi {\rm{i}})^2} \oiint_{z \in c_{z}^{i}, w \in c_{w}^{j}} [S(z,x)-S(w,x)] e^{\frac{1}{\varepsilon}[S(z,x)-S(w,x)]} \frac{z+w}{z(z-w)^2} {\rm{d}}z {\rm{d}}w =0.
\end{equation*}
This completes the proof.
\end{proof}

\subsection*{Bulk scaling limit}

By arguing precisely the same way as in the proof of Theorem \ref{thm:limit_shape}, 
we obtain the following scaling limit: \[
\lim_{\varepsilon \to 0} (-1)^{x/\varepsilon+t}K\left(\frac{x}{\varepsilon}+r,-\frac{x}{\varepsilon}-s;\frac{t}{\varepsilon}\right) =
\begin{cases}
    \delta_{r,s} & x \le -b\\
    \frac{\sin{\chi(x)(r-s)}}{\pi(r-s)} & -b\le x\le b\\
    0 & b\le x.
\end{cases}
\]

Indeed, the left-hand side is written as \[
(-1)^{x/\varepsilon+t}K\left(\frac{x}{\varepsilon}+r,-\frac{x}{\varepsilon}-s;\frac{t}{\varepsilon}\right) 
= \frac{1}{2(2\pi{\rm i})^2} \oiint_{\abs{z}>\abs{w}} \exp\left(\frac{1}{\varepsilon}[S(z,x)-S(w,x)]\right)\frac{z+w}{z-w}\frac{{\rm d}z{\rm d}w}{z^{r+1}w^{-s+1}}.
\]
When $-b\le x\le b$, for example, deforming the integration contour $\abs{z}>\abs{w}$ to $z\in c_z,w\in c_w$ costs  \begin{align*}
\frac{1}{2\cdot 2\pi {\rm i}}\int_{c_\chi} \operatorname*{Res}_{z=w} \exp\left(\frac{1}{\varepsilon}[S(z,x)-S(w,x)]\right)\frac{z+w}{z^{r+1}w^{-s+1}}{\rm d} w 
&= \frac{1}{2\pi} \int_{-\pi}^{\pi} e^{-{\rm i}(r-s)\theta}{\rm d}\theta\\
&= \frac{\sin{\chi(x)(r-s)}}{\pi(r-s)}.
\end{align*}
On the other hand, the integral on the deformed contour $z\in c_z,w\in c_w$ converges to $0$ essentially because, although the integrand contains a simple pole of order 1 at $z=w$, it is integrable since the integration is repeated twice.
This proves our assertion.

As a corollary, we have the scaling limit of the one-point function,
\begin{equation}
\lim_{\varepsilon \to 0}\rho \left(\frac{x}{\varepsilon};\frac{t}{\varepsilon}\right) = \frac{\chi(x)}{\pi} , \quad 0 \le x \le b .
\label{eq:diagonalbulk}
\end{equation}

Moreover, by deforming both the integration contour for $z$ and $w$ into $c_z$ in (i), or $c_w$ in (ii), we obtain \begin{enumerate}
    \item $\lim_{\varepsilon \to 0} K\left(\frac{x}{\varepsilon}+r,\frac{x}{\varepsilon}+s;\frac{t}{\varepsilon}\right)=0,$
    \item $\lim_{\varepsilon \to 0} (-1)^{\frac{x}{\varepsilon}+r+\frac{x}{\varepsilon}+s}K\left(-\frac{x}{\varepsilon}-r,-\frac{x}{\varepsilon}-s;\frac{t}{\varepsilon}\right)=0.$
\end{enumerate}
Therefore, using Theorem \ref{thm:PfPP}, we obtain the following result.

\begin{proposition}
    For $0 \le x \le b$, we define the sine kernel by $K_{\sin}(r,s) = \frac{\sin{\chi(x)(r-s)}}{\pi(r-s)}$. 
    Then, the correlation function in the bulk scaling limit is given by a determinant of the sine kernel,
    \[
    \lim_{\varepsilon \to 0} \rho\left(\left\{\frac{x}{\varepsilon}+k_i\right\}_{i=1}^N;\frac{t}{\varepsilon}\right) = \det_{1\le i,j\le N} K_{\sin}(k_i,k_j).
    \]
\end{proposition}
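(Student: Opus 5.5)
The plan is to start from the Pfaffian formula of Theorem~\ref{thm:PfPP} with $A=\{x/\varepsilon+k_1,\dots,x/\varepsilon+k_N\}$ (parameters $t/\varepsilon$), and to show that the $2N\times 2N$ skew-symmetric matrix $M(A;t/\varepsilon)$ converges entrywise to a matrix of block form $\begin{pmatrix} O & B\\ -B^{\top} & O\end{pmatrix}$. Since a Pfaffian is a polynomial in the entries, entrywise convergence is enough to pass to the limit. The Pfaffian of the limit is then evaluated by the identity $\mathrm{Pf}\begin{pmatrix} O & B\\ -B^{\top} & O\end{pmatrix}=(-1)^{\binom N2}\det B$ recalled in the introduction.

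The entries are handled block by block, with $a_i=x/\varepsilon+k_i$.

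The upper-left block has entries $K(a_i,a_j;t/\varepsilon)$ with $i<j$. These tend to $0$ by item (i) preceding the proposition.

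The lower-right block has entries $(-1)^{a_{2N-i+1}+a_{2N-j+1}}K(-a_{2N-i+1},-a_{2N-j+1};t/\varepsilon)$. These tend to $0$ by item (ii).

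The off-diagonal block has entries $M_{i,j}=(-1)^{a_{2N-j+1}}K(a_i,-a_{2N-j+1};t/\varepsilon)$ for $i\le N<j$. Writing $m=2N-j+1$, so that $m$ runs over $1,\dots,N$ as $j$ runs from $2N$ down to $N+1$, the bulk limit established above gives
\[
(-1)^{x/\varepsilon+k_m}K\Bigl(\tfrac{x}{\varepsilon}+k_i,-\tfrac{x}{\varepsilon}-k_m;\tfrac t\varepsilon\Bigr)\to K_{\sin}(k_i,k_m),
\]
valid since $0\le x\le b$. Here I read the sign $(-1)^{x/\varepsilon+t}$ in that display as $(-1)^{x/\varepsilon+s}$, and I would check this against \eqref{eq:chamu}, where the factor $(-1)^{x/\varepsilon}$ is absorbed via $e^{S(w,-x)/\varepsilon}=e^{-S(-w,x)/\varepsilon}(-1)^{x/\varepsilon}$. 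Hence the off-diagonal block tends to $B=\bigl(K_{\sin}(k_i,k_{2N-j+1})\bigr)_{i,\,j}$, which is the matrix $\bigl(K_{\sin}(k_i,k_m)\bigr)_{i,m}$ with its columns written in reversed order.

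It remains to take care of the signs. Reversing the $N$ columns of $B$ is a permutation of sign $(-1)^{\binom N2}$. Therefore
\[
\mathrm{Pf}=(-1)^{\binom N2}\det B=(-1)^{\binom N2}(-1)^{\binom N2}\det\bigl(K_{\sin}(k_i,k_m)\bigr)=\det_{1\le i,j\le N}K_{\sin}(k_i,k_j).
\]
As a consistency check, the case $N=1$ recovers \eqref{eq:diagonalbulk}, since $K_{\sin}(r,r)=\chi(x)/\pi$.

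The main obstacle is not this algebra but the rigour of the three entrywise limits. The vanishing of the diagonal blocks needs both contours deformed to the same side, $c_z$ or $c_w$, where $\operatorname{Re}S$ has a definite sign, without picking up any residue at $z=-w$ from the factor $\frac{z-w}{z+w}$. The off-diagonal limit needs the dominated convergence argument of Case~2 in the proof of Theorem~\ref{thm:limit_shape}, now for the kernel $\frac{z+w}{z-w}$, which has only a simple pole at $z=w$ and is integrable in the iterated integral. I would reuse those estimates verbatim, so that the proof of the proposition itself reduces to the Pfaffian-to-determinant reduction above.
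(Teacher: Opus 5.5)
Your reduction follows the paper: the Pfaffian formula of Theorem~\ref{thm:PfPP}, entrywise limits, and the block Pfaffian identity. Your sign bookkeeping is correct, both the column reversal costing $(-1)^{\binom N2}$ and the reading of $(-1)^{x/\varepsilon+t}$ as $(-1)^{x/\varepsilon+s}$. The gap is in the diagonal blocks, where you assume the $z=-w$ residue can be avoided.

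\textbf{The $z=-w$ residue is unavoidable.} For $x\ge 0$, $c_z$ lies inside the unit circle near $\arg=0$ and outside near $\arg=\pi$. So when both variables are moved onto $c_z$, $z$ must sweep across $-w$ for $w$ in an arc around $\arg w=\pi$. Since $\xi$ is odd, $S(-w,x)+S(w,x)=-x\log w-x\log(-w)$. Hence the residue of $e^{(S(z,x)+S(w,x))/\varepsilon}\frac{z-w}{z+w}z^{-r-1}w^{-s-1}$ at $z=-w$ is the pure power $2(-1)^{x/\varepsilon+r}w^{-2x/\varepsilon-r-s-1}$. It adds to $K(x/\varepsilon+r,x/\varepsilon+s;t/\varepsilon)$ a term of size $O\bigl((2x/\varepsilon+r+s)^{-1}\bigr)$, since the swept arc has endpoints with $|w|\ge 1$.

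\textbf{For $0<x\le b$} this term is $O(\varepsilon)$. Your argument then goes through, provided you estimate this residue explicitly instead of assuming it is absent.

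\textbf{At $x=0$} the term is of order one, because the sites $\pm(x/\varepsilon+k)$ are no longer separated. In fact the off-diagonal limit you quote, applied with $s\mapsto -s$ (its derivation never uses the sign of $s$), gives
\[
K(r,s;t/\varepsilon)\longrightarrow(-1)^s\,\frac{\sin\bigl(\pi(r+s)/2\bigr)}{\pi(r+s)}\qquad(x=0),
\]
which is nonzero whenever $r+s$ is odd. A concrete check: take $t_1=1/2$, so that $J(m;t/\varepsilon)$ is the Bessel function $J_m(2/\varepsilon)$. Then $K(1,2;t/\varepsilon)=-\tfrac12J_1J_2+\sum_{m\ge0}J_mJ_{m+3}\to-\tfrac{1}{3\pi}$, and likewise $M_{34}\to-\tfrac1{3\pi}$. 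For $N=2$ and $(k_1,k_2)=(1,2)$ the Pfaffian therefore tends to $\tfrac14-\tfrac1{\pi^2}+\tfrac1{9\pi^2}$, not $\det K_{\sin}=\tfrac14-\tfrac1{\pi^2}$.

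So the statement itself fails at the endpoint $x=0$: the reflection at the origin for strict partitions keeps the limit a genuine Pfaffian. Your proof, like the paper's one-line deduction from items (i) and (ii), is valid only for $0<x\le b$. Even there it needs the explicit $O(\varepsilon)$ bound on the $z=-w$ residue.
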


\section{Multicritical Edge Scaling Limit}
\label{sec:edge_lim}
In this section, we turn our attention to the edge behaviour of the previously established limit shape. 
By construction, the limit
\begin{equation*}
    \lim_{\varepsilon \to 0} \varepsilon\sum_{k=1}^{\infty} \rho\left(\frac{x}{\varepsilon}+k;\frac{t}{\varepsilon}\right) =
        \frac{1}{\pi}\int_0^{\chi(x)}(D(\theta)-x){\rm d}\theta, \quad 0\le x\le b
\end{equation*}
can be interpreted as the (scaled) expected total number of neutral fermions lying in the interval $(x,\infty)$. 
Consequently, the negative of its derivative,
\begin{equation*}
    -\frac{\rm d}{{\rm d}x}\frac{1}{\pi}\int_0^{\chi(x)}(D(\theta)-x){\rm d}\theta
    = \frac{\chi(x)}{\pi}
\end{equation*}
naturally identifies with the fermion density function.
Under the $p$-multicriticality condition (iii), we find $D(\theta)= a-\theta^p+O(\theta^{p+1})$. 
This implies that, near the edge $x=a$ (where $a=b$), the density scales proportionally to $(a-x)^{\frac{1}{p}}$.

The right-hand side of the above formula appears in the bulk scaling limit (\ref{eq:diagonalbulk}).
Thus, evaluating the limit
\begin{equation*}
    \lim_{\varepsilon \to 0}\frac{1}{\varepsilon} \rho\left(\frac{a+\varepsilon^{p}x}{\varepsilon^{p+1}};\frac{t}{\varepsilon^{p+1}}\right)
\end{equation*}
should yield precise insights into the microscopic fluctuations of the profile function $\psi_{\lambda,\varepsilon}$ near the edge $x=a$.
In what follows, we determine the more general scaling limit of the $N$-point function,
\begin{equation*}
    \lim_{\varepsilon \to 0}\frac{1}{\varepsilon^N}\rho\left(\left\{\frac{a}{\varepsilon^{p+1}}+\frac{k_i}{\varepsilon}\right\}_{i=1}^N;\frac{t}{\varepsilon^{p+1}}\right)
\end{equation*}
under this $p$-multicritical condition.

Let us define
\begin{equation*}
    J(i;t) = \frac{1}{2\pi{\rm i}} \oint_{|z|=1} \frac{1}{z^{i+1}}
    \mathbb{J}(z;t) {\rm{d}}z.
\end{equation*}
Using \eqref{eq:K}, the correlation kernel can then be expressed as
\begin{equation*}
    K(i,j;t) = \frac{1}{2}J(i;t)J(j;t) + \sum_{k=1}^\infty (-1)^k J(i+k;t)J(j-k;t).
    \label{eq:Kexpansion}
\end{equation*}
Furthermore, by introducing the shifted phase function
\begin{equation*}
    \varphi_{\varepsilon}(\theta)=\varphi(\theta)-\varepsilon^{p}x\theta,
\end{equation*}
the scaled coefficients neatly take the integral form
\begin{equation}
    J\left(\frac{a}{\varepsilon^{p+1}}+\frac{x}{\varepsilon};\frac{t}{\varepsilon^{p+1}}\right)=\frac{1}{2\pi}\int_{-\pi}^{\pi}\exp\left(\frac{\mathrm{i}}{\varepsilon^{p+1}}\varphi_{\varepsilon}(\theta)\right)\mathrm{d}\theta.
    \label{eq:int_for_J}
\end{equation}

\begin{theorem}\label{thm:Airy}
    Assuming conditions (i)--(iv) hold, we have 
    \begin{equation*}
        \lim_{\varepsilon \to 0} \frac{1}{\varepsilon} J\left(\frac{a}{\varepsilon^{p+1}}+\frac{x}{\varepsilon};\frac{t}{\varepsilon^{p+1}}\right) = \mathrm{Ai}_p(x),
    \end{equation*}
    where $\mathrm{Ai}_p$ denotes the $p$-Airy function (see \autoref{app:higher_Airy}).
\end{theorem}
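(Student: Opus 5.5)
We would start from the integral representation \eqref{eq:int_for_J} and run a stationary-phase analysis around $\theta=0$, which is the only point where the phase $\varphi_\varepsilon$ degenerates. By condition (i) the function $\varphi$ is real and odd, so the integral in \eqref{eq:int_for_J} is real. It equals $\frac{1}{\pi}\int_0^\pi \cos\bigl(\varepsilon^{-(p+1)}\varphi_\varepsilon(\theta)\bigr)\,\mathrm{d}\theta$.

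We split the range into $|\theta|\le \delta$ and $\delta\le|\theta|\le\pi$, with $\delta>0$ small and fixed. On the inner region we substitute $\theta=\varepsilon u$. The multicritical expansion \eqref{eq:multicritical} then gives
\begin{equation*}
\varepsilon^{-(p+1)}\varphi_\varepsilon(\varepsilon u) = -\frac{u^{p+1}}{p+1} - xu + O(\varepsilon^2 u^{p+3}).
\end{equation*}
Hence
\begin{equation*}
\frac{1}{\varepsilon}\cdot\frac{1}{2\pi}\int_{|\theta|\le\delta}(\cdots)\,\mathrm{d}\theta = \frac{1}{2\pi}\int_{|u|\le\delta/\varepsilon}\exp\Bigl({\rm i}\Bigl[-\frac{u^{p+1}}{p+1}-xu+O(\varepsilon^2u^{p+3})\Bigr]\Bigr)\mathrm{d}u .
\end{equation*}
Formally this tends to $\frac{1}{2\pi}\int_{\mathbb{R}} e^{-{\rm i}(u^{p+1}/(p+1)+xu)}\,\mathrm{d}u$, which is $\mathrm{Ai}_p(x)$ in the convention of the appendix (real by parity, since $p$ is even).

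\textbf{Main obstacle: the oscillatory tails.} The limit above cannot be taken by dominated convergence, because the integrand is not absolutely integrable on $\mathbb{R}$. We would handle this by integrating by parts. Write the integrand as $e^{{\rm i}\varepsilon^{-(p+1)}\varphi_\varepsilon(\theta)}$ and use the identity
\begin{equation*}
e^{{\rm i}\lambda f}=\frac{1}{{\rm i}\lambda f'}\,\frac{\mathrm{d}}{\mathrm{d}\theta}e^{{\rm i}\lambda f}, \qquad \lambda=\varepsilon^{-(p+1)},\ f=\varphi_\varepsilon .
\end{equation*}
This requires a lower bound on $|\varphi_\varepsilon'|$, which comes from \eqref{eq:C}. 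For fixed $x$ and $|\theta|\ge R\varepsilon$ with $R$ large, we have $\varphi_\varepsilon'(\theta)=\varphi'(\theta)-\varepsilon^p x\le -\theta^p/C+\varepsilon^p|x|\le -\theta^p/(2C)$.

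\emph{Outer region.} One integration by parts over $R\varepsilon\le|\theta|\le\pi$ produces two contributions:
\begin{itemize}
\item boundary terms of size $\varepsilon^{p+1}/|\varphi_\varepsilon'(R\varepsilon)|\lesssim \varepsilon/R^p$;
\item an integral of $\varepsilon^{p+1}|\varphi_\varepsilon''|/\varphi_\varepsilon'^2$, which is also $O(\varepsilon R^{-p})$ after using $|\varphi''|\lesssim|\theta|^{p-1}$ near $0$.
\end{itemize}
The endpoints $\pm\pi$ contribute terms that cancel by $2\pi$-periodicity. Indeed, $\varphi_\varepsilon(\pi)-\varphi_\varepsilon(-\pi)$ is $2\pi$ times an integer multiple of $\varepsilon^{p+1}$, because $a/\varepsilon^{p+1}+x/\varepsilon$ is an integer index. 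After dividing by $\varepsilon$, the whole outer contribution is therefore $O(R^{-p})$, uniformly in $\varepsilon$.

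\emph{Inner region.} On $|u|\le R$ the error $O(\varepsilon^2u^{p+3})$ tends to zero uniformly, so the inner integral converges to $\frac{1}{2\pi}\int_{-R}^{R}e^{-{\rm i}(u^{p+1}/(p+1)+xu)}\,\mathrm{d}u$.

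\emph{Conclusion.} The same integration by parts shows that this truncated integral approaches the full improper integral $\mathrm{Ai}_p(x)$ with error $O(R^{-p})$. Letting $\varepsilon\to0$ first and then $R\to\infty$ gives the theorem. The genuinely delicate point is the uniform lower bound on $|\varphi_\varepsilon'|$ away from the origin; this is exactly where the non-degeneracy condition (iv) enters, via \eqref{eq:C}.
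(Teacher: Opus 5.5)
Your proposal is correct and follows essentially the same route as the paper: the paper also rescales $\theta=\varepsilon u$, splits at $|u|=T$ (your $R$), passes to the limit on $[-T,T]$ by dominated convergence, and bounds the tails by a single integration by parts using \eqref{eq:C}, taking $\varepsilon\to0$ before $T\to\infty$. The only difference is that you cancel the $\pm\pi$ endpoint terms by $2\pi$-periodicity, whereas the paper simply bounds them by $O(\varepsilon^{p})$; either works.
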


\begin{proof}
Our strategy is to isolate the saddle points of the phase function $\varphi_\varepsilon$ from \eqref{eq:int_for_J} within an interval of width $O(\varepsilon)$. 
The dominant contribution from this central interval will naturally yield the $p$-Airy function.

Let $C$ denote the constant introduced in (\ref{eq:C}). 
We fix a real number $T$ strictly greater than $\sqrt[p]{|Cx|}$, and choose $\varepsilon > 0$ sufficiently small so that $T\varepsilon < \pi$. 
Since the derivative satisfies $\varphi_{\varepsilon}'(\theta)\le-\theta^p/C-\varepsilon^{p}x$, any existing zeros of $\varphi_{\varepsilon}'(\theta)$ must have an absolute value bounded by $\sqrt[p]{|Cx|}\varepsilon$, which is strictly less than $T\varepsilon$. 
By scaling the integration variable as $\theta = \varepsilon u$, we can partition the integral as follows:
\begin{equation*}
    \frac{1}{2\pi\varepsilon} 
    \int_{-\pi}^{\pi}\exp\left(\frac{\mathrm{i}}{\varepsilon^{p+1}}\varphi_{\varepsilon}(\theta)\right)\mathrm{d}\theta 
    = \frac{1}{2\pi}\left(\int_{-\pi/\varepsilon}^{-T}+\int_{-T}^{T}+\int_{T}^{\pi/\varepsilon}\right)\exp\left(\frac{\mathrm{i}}{\varepsilon^{p+1}}\varphi_{\varepsilon}(\varepsilon u)\right)\mathrm{d}u,
\end{equation*}
where, crucially, the derivative $\varphi_{\varepsilon}'(\varepsilon u)$ is non-vanishing on the outer domains $u \in (-\pi/\varepsilon, \pi/\varepsilon) \setminus (-T, T)$. 

As $\varepsilon \to 0$, we observe the asymptotic behaviour of the scaled phase:
\begin{equation*}
    \frac{1}{\varepsilon^{p+1}}\varphi_{\varepsilon}(\varepsilon u)
    =\frac{1}{\varepsilon^{p+1}}\left(-\frac{(\varepsilon u)^{p+1}}{p+1}+O((\varepsilon u)^{p+3})-\varepsilon^{p+1}xu\right)\to-\frac{u^{p+1}}{p+1}-xu.
\end{equation*}
Invoking the dominated convergence theorem for the central interval, we obtain 
\begin{align*}
    \lim_{\varepsilon\to0}\frac{1}{2\pi}\int_{-T}^{T}\exp\left(\frac{\mathrm{i}}{\varepsilon^{p+1}}\varphi_{\varepsilon}(\varepsilon u)\right)\mathrm{d}u 
    &= \frac{1}{2\pi}\int_{-T}^{T}\exp\left(-\mathrm{i}\frac{u^{p+1}}{p+1}-\mathrm{i}xu\right)\mathrm{d}u   \notag\\
    & =\frac{1}{\pi}\int_{0}^{T}\cos\left(\frac{u^{p+1}}{p+1}+xu\right)\mathrm{d}u.
\end{align*}
Conversely, for the outer region $[T, \pi/\varepsilon]$, integration by parts yields 
\begin{align*} 
    &\quad\,\,\int_{T}^{\pi/\varepsilon}\exp\left(\frac{\mathrm{i}}{\varepsilon^{p+1}}\varphi_{\varepsilon}(\varepsilon u)\right)\mathrm{d}u \\
    &\qquad=\left[\frac{\exp\left(\frac{\mathrm{i}}{\varepsilon^{p+1}}\varphi_{\varepsilon}(\varepsilon u)\right)}{\frac{1}{\varepsilon^{p}}\varphi_{\varepsilon}'(\varepsilon u)}\right]_{T}^{\pi/\varepsilon}
    +\int_{T}^{\pi/\varepsilon}\exp\left(\frac{\mathrm{i}}{\varepsilon^{p+1}}\varphi_{\varepsilon}(\varepsilon u)\right)\frac{\frac{\mathrm{i}}{\varepsilon^{p-1}}\varphi_{\varepsilon}''(\varepsilon u)}{\left(\frac{1}{\varepsilon^{p}}\varphi_{\varepsilon}'(\varepsilon u)\right)^{2}}\mathrm{d}u   \notag\\
    &\qquad= O(\varepsilon^{p})+O\left(\frac{1}{T^{p}}\right), \notag
\end{align*}
with an entirely analogous estimate holding for the integral over the negative outer region $[-\pi/\varepsilon, -T]$. 

Taking the limit as $\varepsilon \to 0$, followed by the limit as $T \to \infty$, we conclude that 
\begin{align*}
    \lim_{\varepsilon\to0}\frac{1}{2\pi}\int_{-\pi/\varepsilon}^{\pi/\varepsilon}\exp\left(\frac{\mathrm{i}}{\varepsilon^{p+1}}\varphi_{\varepsilon}(\varepsilon u)\right)\mathrm{d}u 
    &= \frac{1}{\pi}\int_{0}^{T}\cos\left(\frac{u^{p+1}}{p+1}+xu\right)\mathrm{d}u+O\left(\frac{1}{T^{p}}\right)    \notag\\
    &= \mathrm{Ai}_{p}(x).
\end{align*}
\end{proof}

\begin{theorem}
\label{thm:kernel_limit}
Assuming conditions (i)--(iv) hold, for any $x,y \in \mathbb{R}$, we have
\begin{align*}
    \begin{cases}
    \mathrm{(i)} & \quad\lim_{\varepsilon\to0}\frac{1}{\varepsilon}K\left(\frac{a}{\varepsilon^{p+1}}+\frac{x}{\varepsilon},\frac{a}{\varepsilon^{p+1}}+\frac{y}{\varepsilon};\frac{t}{\varepsilon^{p+1}}\right)=0,\\[1ex]
    \mathrm{(ii)} & \quad\lim_{\varepsilon\to0}\frac{1}{\varepsilon}(-1)^{\frac{a}{\varepsilon^{p+1}}+\frac{y}{\varepsilon}}K\left(\frac{a}{\varepsilon^{p+1}}+\frac{x}{\varepsilon},-\frac{a}{\varepsilon^{p+1}}-\frac{y}{\varepsilon};\frac{t}{\varepsilon^{p+1}}\right)=K_{p\text{-}\mathrm{Airy}}(x,y),\\[1ex]
    \mathrm{(iii)} & \quad\lim_{\varepsilon\to0}\frac{1}{\varepsilon}(-1)^{\frac{a}{\varepsilon^{p+1}}+\frac{x}{\varepsilon}+\frac{a}{\varepsilon^{p+1}}+\frac{y}{\varepsilon}}K\left(-\frac{a}{\varepsilon^{p+1}}-\frac{x}{\varepsilon},-\frac{a}{\varepsilon^{p+1}}-\frac{y}{\varepsilon};\frac{t}{\varepsilon^{p+1}}\right)=0.
    \end{cases}
\end{align*}
Consequently, we obtain the determinantal structure in the limit:
\begin{equation*}
\lim_{\varepsilon\to0}\varepsilon^N\mathrm{Pf}\left(M\left(\left\{ \frac{a}{\varepsilon^{p+1}}+\frac{k_{i}}{\varepsilon}\right\}_{i=1}^N ;\frac{t}{\varepsilon^{p+1}}\right)\right)=\det_{1 \le i, j \le N} K_{p\text{-}\mathrm{Airy}}(k_{i},k_{j}).
\end{equation*}
\end{theorem}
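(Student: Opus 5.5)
The plan is to work from the series expansion \eqref{eq:Kexpansion},
\[
K(i,j;t)=\tfrac12 J(i;t)J(j;t)+\sum_{k\ge1}(-1)^kJ(i+k;t)J(j-k;t),
\]
together with the convergence of Theorem~\ref{thm:Airy}. We also need a symmetry of the wave function. Since $t$ is real, $\mathbb{J}(-z^{-1};t)=\mathbb{J}(z;t)$. Substituting $z\mapsto -z^{-1}$ in the contour integral defining $J$ therefore gives $J(-i;t)=(-1)^iJ(i;t)$. All three kernels are thus expressed through $J$ at indices near $+a/\varepsilon^{p+1}$. In particular, $(-1)^{j}J(j-k)$ with $j=-(a/\varepsilon^{p+1}+y/\varepsilon)$ becomes, up to the sign $(-1)^{k}$, the value $J(a/\varepsilon^{p+1}+y/\varepsilon+k)$.

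For (ii), the alternating signs cancel and we get
\[
(-1)^{j}K(i,-j')=\tfrac12 J(i)J(j')+\sum_{k\ge1}J(i+k)J(j'+k),
\qquad i,j'\approx a/\varepsilon^{p+1}.
\]
Write $k=u/\varepsilon$. The sum times $\varepsilon^{-1}$ is a Riemann sum $\sum_k \varepsilon\,[\varepsilon^{-1}J(i+k)][\varepsilon^{-1}J(j'+k)]$, which should converge to $\int_0^\infty \mathrm{Ai}_p(x+u)\mathrm{Ai}_p(y+u)\,{\rm d}u=K_{p\text{-Airy}}(x,y)$. The boundary term $\tfrac12 J J/\varepsilon=O(\varepsilon)$ vanishes.

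For (i) and (iii), the terms keep the factor $(-1)^k$. Pairing consecutive $k$ gives differences $J(i+k)-J(i+k+1)$, which are $O(\varepsilon^2)$ by the smoothness of the limit, i.e.\ $\varepsilon\,\mathrm{Ai}_p'$. The alternating sum divided by $\varepsilon$ should then vanish. There is, however, a second regime to handle: $j-k$ runs through negative indices, down to $-\infty$. There the symmetry $J(-m)=(-1)^mJ(m)$ converts the alternating sum near $j-k\approx -a/\varepsilon^{p+1}$ into a non-alternating one, so it is not obviously negligible. Indices in the bulk region $|m|<a/\varepsilon^{p+1}$ must also be controlled.

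The cleaner route is probably the double contour integral \eqref{eq:K}, mimicking the bulk argument. The limit-shape proof already showed that the analogues of (i) and (iii) vanish, by deforming both contours into $c_z$ (resp.\ $c_w$), where $\operatorname{Re}S<0$ away from the single touching point, which now sits at $\theta=0$. The point $z=-w$ of $\frac{z-w}{z+w}$ is avoided.

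The main obstacle is uniform domination, needed to justify the Riemann-sum limit in (ii) and the vanishing in (i) and (iii). Pointwise convergence of $\varepsilon^{-1}J$ is not enough: one needs a bound $|\varepsilon^{-1}J(a/\varepsilon^{p+1}+u/\varepsilon)|\le C_N(1+u)^{-N}$ for $u\ge0$, uniformly in $\varepsilon$. I would try to get this by shifting the $\theta$-contour in \eqref{eq:int_for_J} slightly into the half plane where $\operatorname{Im}\varphi_\varepsilon>0$. Using \eqref{eq:C}, this gives exponential decay $e^{-cu^{(p+1)/p}}$, mirroring the decay of $\mathrm{Ai}_p$ on the positive axis. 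One also needs a crude bound for $u<0$ down to $-a/\varepsilon^{p}$ (bulk indices), which enter only via $J(j-k)$ paired with a rapidly decaying $J(i+k)$. The plain bound $|J|\le1$ suffices there.

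With the kernel limits established, the final determinantal statement follows from Theorem~\ref{thm:PfPP}. Multiplying $M$ by $\varepsilon$ entrywise scales the Pfaffian by $\varepsilon^N$. By (i) and (iii), the scaled diagonal blocks tend to $0$, and by (ii) the off-diagonal block tends to $K_{p\text{-Airy}}(k_i,k_{N+1-j'})$ in reversed column order. Continuity of the Pfaffian and $\mathrm{Pf}\begin{pmatrix}O&A\\-A^\top&O\end{pmatrix}=(-1)^{\binom N2}\det A$ then apply. The column reversal contributes another $(-1)^{\binom N2}$, which cancels, leaving $\det K_{p\text{-Airy}}(k_i,k_j)$.
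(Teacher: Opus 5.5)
Your treatment of (ii) and of the Pfaffian-to-determinant step matches the paper's own proof: the series expansion, the parity $J(-i)=(-1)^iJ(i)$, the Riemann sum, and dominated convergence with a bound obtained by moving the contour to $|z|=e^{\varepsilon}$. That bound is exactly Lemma~\ref{lem:J_bound}, which gives $Ce^{-x}$. This already suffices, so you do not need the sharper $e^{-cu^{(p+1)/p}}$. Your sign count for the reversed columns is also correct.

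The gap is in (i) and (iii). You appeal to the bulk/limit-shape argument, but that argument only shows $K\to 0$: it is dominated convergence on fixed contours, using $e^{\operatorname{Re}S/\varepsilon}\to 0$ away from the touching point. Statements (i) and (iii) assert the stronger $K=o(\varepsilon)$. At the edge the touching point $\theta=0$ is a degenerate critical point where $\operatorname{Re}S$ vanishes to order $p+1$. The rate therefore has to come from a local analysis at scale $\varepsilon$; a domination bound for the unscaled integrand does not give it.

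The missing step is an explicit scale count, which is the whole content of the paper's proof of (i).
Take $|z|=e^{2\varepsilon}$ and $|w|=e^{\varepsilon}$. These circles are concentric, so neither $z=w$ nor $z=-w$ is crossed.
By \eqref{eq:observation} and \eqref{eq:C}, for $c=1,2$,
$\varepsilon^{-(p+1)}\operatorname{Re}S(e^{c\varepsilon+{\rm i}\theta},a)\le-\theta^p/(C'\varepsilon^p)+O(1)$, and also $\varepsilon^{-1}x\log z=O(1)$.

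On the central region $|\arg z|,|\arg w|<\varepsilon^{1-\delta}$:
\begin{itemize}
\item the factor satisfies $\frac{z-w}{z+w}=O(1)$;
\item after the substitution $\theta=\varepsilon u$, each angular integral is $O(\varepsilon)$;
\item so the double integral is $O(\varepsilon^2)$, and hence $\varepsilon^{-1}K=O(\varepsilon)\to 0$.
\end{itemize}

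On the complement:
\begin{itemize}
\item one exponential factor is $O(\exp(-\varepsilon^{-p\delta}/C'))$ and the other is $e^{O(1)}$;
\item the two circles are only $O(\varepsilon)$ apart, so $\frac{z-w}{z+w}=O(\varepsilon^{-1})$ near antipodal points;
\item the super-polynomial decay is what absorbs this blow-up. Merely noting that $z=-w$ is avoided is not enough.
\end{itemize}

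Case (iii) is analogous. Once you supply this two-region estimate, your argument proves the theorem.
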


\begin{remark}
    The theorem is stated for a pair of numbers $x,y$ such that there exists a sequence of positive numbers $\varepsilon_1>\varepsilon_2>\cdots \to 0$ with $\frac{a}{\varepsilon_i^{p+1}}+\frac{x}{\varepsilon}, \frac{a}{\varepsilon_i^{p+1}}+\frac{y}{\varepsilon} \in \mathbb{Z}$.
    However, the left-hand sides admit double contour integral (with respect to $z,w$) expressions, which do not contain the sign (cf. (\ref{eq:chamu})).
    Then, by dividing the contours into subarcs and fixing branches for $\arg z,\arg w$, we obtain continuations of the left-hand sides for arbitrary $x,y\in \mathbb{R}$ (cf. proof of \autoref{thm:limit_shape}).
    Since the convergence is uniform when $x,y$ range over a compact set, we obtain the same results for arbitrary $x,y$ by incorporating the floor function; for example, \[
    \lim_{\varepsilon\to0}\frac{1}{\varepsilon}(-1)^{\left\lfloor\frac{a}{\varepsilon^{p+1}}+\frac{y}{\varepsilon}\right\rfloor}K\left(\left\lfloor\frac{a}{\varepsilon^{p+1}}+\frac{x}{\varepsilon}\right\rfloor,-\left\lfloor\frac{a}{\varepsilon^{p+1}}+\frac{y}{\varepsilon}\right\rfloor;\frac{t}{\varepsilon^{p+1}}\right)=K_{p\text{-}\mathrm{Airy}}(x,y).
    \]
\end{remark}

\begin{lemma}
\label{lem:J_bound}
There exists a positive constant $C$ such that 
\begin{equation*}
    \left|\frac{1}{\varepsilon}J\left(\frac{a}{\varepsilon^{p+1}}+\frac{x}{\varepsilon};\frac{t}{\varepsilon^{p+1}}\right)\right|\le Ce^{-x},\quad x>0
\end{equation*}
for all $\varepsilon > 0$ sufficiently small. 
\end{lemma}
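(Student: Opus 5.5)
Since $x>0$, I plan to shift the integration contour in \eqref{eq:int_for_J} off the unit circle. Set $N=\frac{a}{\varepsilon^{p+1}}+\frac{x}{\varepsilon}$ and start from the contour integral
\[
J(N;t/\varepsilon^{p+1})=\frac{1}{2\pi{\rm i}}\oint \frac{\mathbb{J}(z;t/\varepsilon^{p+1})}{z^{N+1}}{\rm d}z .
\]
Here $\mathbb{J}(z)=\exp\bigl(\tfrac{2}{\varepsilon^{p+1}}(\xi(t,z)-\xi(t,z^{-1}))\bigr)$ by oddness and reality of $t$, and the integrand is analytic on $\mathbb{C}^\times$ by condition (ii). I will move the contour to the circle $|z|=e^{\varepsilon\eta}$ for some fixed $\eta>0$ to be chosen, and parametrise $z=e^{\varepsilon\eta+{\rm i}\theta}$. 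The exponent becomes $\frac{1}{\varepsilon^{p+1}}F_\varepsilon(\theta)$, where
\[
F_\varepsilon(\theta)=4\sum_n t_n\sinh(n(\varepsilon\eta+{\rm i}\theta)) - (a+\varepsilon^p x)(\varepsilon\eta+{\rm i}\theta),
\]
since $2\xi(t,z)-2\xi(t,z^{-1})=4\sum t_n\sinh(n\log z)$. Equivalently $F_\varepsilon(\theta)={\rm i}\varphi_\varepsilon(\theta-{\rm i}\varepsilon\eta)$, with $\varphi$ analytically continued.

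The key step is to estimate $\operatorname{Re}F_\varepsilon$. A Taylor expansion in the imaginary direction gives
\[
\operatorname{Re}\,{\rm i}\varphi_\varepsilon(\theta-{\rm i}\delta)=\delta\,\varphi_\varepsilon'(\theta)+O(\delta^3\|\varphi'''\|),\qquad \delta=\varepsilon\eta .
\]
By \eqref{eq:C}, $\varphi_\varepsilon'(\theta)\le -\theta^p/C-\varepsilon^p x$. Hence
\[
\frac{1}{\varepsilon^{p+1}}\operatorname{Re}F_\varepsilon \le -\frac{\eta\theta^p}{C\varepsilon^p}-\eta x+\frac{R(\theta,\delta)}{\varepsilon^{p+1}}.
\]
The remainder needs care near $\theta=0$. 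There I will use the multicritical expansion \eqref{eq:multicritical}: with $\theta=\varepsilon u$,
\[
\frac{1}{\varepsilon^{p+1}}{\rm i}\varphi(\varepsilon(u-{\rm i}\eta))\approx-\frac{{\rm i}(u-{\rm i}\eta)^{p+1}}{p+1}.
\]
Its real part is $-\eta u^p+\binom{p+1}{2}$-type corrections of the form $\eta^3u^{p-2}\cdots$, and all of these are lower order in $u$ except the constant $\eta^{p+1}/(p+1)$ with sign $(-1)^{p/2}$-dependent coefficients. So the total is bounded by $-c\eta u^p+C'(\eta^3 u^{p-2}+\dots+\eta^{p+1})$. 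Away from $\theta=0$ (for $|\theta|\ge\delta_0$), the error is $O(\varepsilon^3)/\varepsilon^{p+1}$ against a main term of order $\varepsilon^{-p}$, so it is harmless for small $\varepsilon$.

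Combining the two regions gives $\operatorname{Re}(\cdot)\le -\eta x + K(\eta) - c'\eta u^p$ uniformly in $u\in[-\pi/\varepsilon,\pi/\varepsilon]$, where $K(\eta)$ depends only on $\eta$. Then
\[
\Bigl|\tfrac{1}{\varepsilon}J\Bigr|\le \frac{1}{2\pi}\int_{\mathbb{R}} e^{-\eta x+K(\eta)-c'\eta|u|^p}{\rm d}u = C e^{-\eta x}.
\]
Taking $\eta=1$ yields the lemma (any fixed $\eta$ works the same way).

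The main obstacle is making the real-part bound uniform in $\theta$ near the origin, where the exact cancellation $\varphi'(0)=\dots=0$ matters. I would handle it by splitting at $|\theta|=\delta_0$ for a small fixed $\delta_0$. For $|\theta|\le\delta_0$, I would use the analytic expansion $\varphi(\zeta)=-\zeta^{p+1}/(p+1)+\zeta^{p+3}g(\zeta)$ with $g$ bounded on a complex neighbourhood. For $|\theta|\ge\delta_0$, I would use condition (iv) via \eqref{eq:C} together with the smallness of $\delta=\varepsilon\eta$.
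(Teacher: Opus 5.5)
Your proposal is correct and is essentially the paper's argument. The paper also moves the contour to $|z|=e^{\varepsilon}$ (your $\eta=1$), extracts $e^{-m\varepsilon}=e^{-a/\varepsilon^{p}}e^{-x}$, expands the real part of the exponent in odd powers of $\varepsilon$ with coefficients $\varphi^{(k)}(\theta)$ as in \eqref{eq:observation}, invokes \eqref{eq:C}, and rescales $\theta=\varepsilon u$.

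Your final form $-c'\eta u^{p}+K(\eta)$ is more accurate than the paper's intermediate claim. The paper asserts the pointwise bound $-\varepsilon\theta^{p}/(2C)$ with no additive term, and this fails near $\theta=0$: for $p=2$, $t_1=1/2$, $a=2$, the left side at $\theta=0$ is $2\sinh\varepsilon-2\varepsilon>0$. A bounded additive constant after scaling, as in your version, is what is actually true, and it is harmless for the $u$-integral.

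The one step to tighten is your treatment of $|\theta|\le\delta_0$. If you bound the contribution of $\zeta^{p+3}g(\zeta)$ by its modulus, you get $\varepsilon^{-p-1}|\varepsilon(u-{\rm i}\eta)|^{p+3}\sup|g|\approx\varepsilon^{2}|u|^{p+3}$. That is not dominated by $\eta u^{p}$ once $|u|\gg\varepsilon^{-2/3}$, and $|u|$ ranges up to $\delta_0/\varepsilon$.

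One fix is to use that $h(\zeta)=\zeta^{p+3}g(\zeta)$ is real on $\mathbb{R}$. Its contribution to the real part is then $\operatorname{Im}[h(\theta)-h(\theta-{\rm i}\delta)]=O\bigl(\delta(|\theta|+\delta)^{p+2}\bigr)$. After scaling this is $O\bigl(\eta(|u|+\eta)^{p}(\delta_0+\varepsilon\eta)^{2}\bigr)$, which is absorbed into the main term for small $\delta_0$.

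A simpler fix is to carry the expansion you already wrote, $\operatorname{Re}\,{\rm i}\varphi_\varepsilon(\theta-{\rm i}\delta)=\sum_{k\ \mathrm{odd}}(-1)^{(k-1)/2}\delta^{k}\varphi_\varepsilon^{(k)}(\theta)/k!$, up to $k=p+1$. Then use $|\varphi^{(k)}(\theta)|\le M_k|\theta|^{p+1-k}$ on $[-\pi,\pi]$, which follows from $\varphi^{(k)}(0)=0$ for $k\le p$. This gives $-\eta|u|^{p}/C-\eta x+\sum_{k}M_k\eta^{k}|u|^{p+1-k}/k!+O(\varepsilon^{2})$ uniformly in $u$, with no split at $\delta_0$ needed.
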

\begin{proof}
By applying the residue theorem to (\ref{eq:mathbbJ}), we have 
\begin{align*}
    J(m;t) 
    & =\frac{1}{2\pi\mathrm{i}}\oint_{|z|=e^{\varepsilon}}\exp\left(2\sum_{n:\text{odd}}t_n(z^{n}-z^{-n})\right)\frac{\mathrm{d}z}{z^{m+1}}  \notag\\
    & =\frac{1}{2\pi}\int_{-\pi}^{\pi}\exp\left(2\sum_{n:\text{odd}}t_n(e^{n\varepsilon}e^{\mathrm{i}n\theta}-e^{-n\varepsilon}e^{-\mathrm{i}n\theta})\right)e^{-m\varepsilon}e^{-\mathrm{i}m\theta}\mathrm{d}\theta
\end{align*}
yielding 
\begin{equation*}
    \left|J\left(\frac{a}{\varepsilon^{p+1}}+\frac{x}{\varepsilon};\frac{t}{\varepsilon^{p+1}}\right)\right|\le\frac{e^{-x}}{2\pi}\int_{-\pi}^{\pi}\exp\left(\frac{1}{\varepsilon^{p+1}}\left\{ 2\sum_{n:\text{odd}}t_n(e^{n\varepsilon}-e^{-n\varepsilon})\cos n\theta-a\varepsilon\right\} \right)\mathrm{d}\theta.
\end{equation*}
It thus suffices to show that the integral on the right-hand side, multiplied by $1/\varepsilon$, remains bounded as $\varepsilon \to 0$.

Let us observe that 
\begin{align} \label{eq:observation}
    &\quad\,\,2\sum_{n:\text{odd}}t_n(e^{n\varepsilon}-e^{-n\varepsilon})\cos n\theta-a\varepsilon \\
    &=\varepsilon\left\{ 4\sum_{n:\text{odd}}nt_n\cos n\theta-a\right\} +\frac{\varepsilon^{3}}{3!}\left\{ 4\sum_{n:\text{odd}}n^3t_n\cos n\theta\right\} + \cdots \notag\\
    &=\varphi'(\theta)\varepsilon - \varphi^{(3)}(\theta)\frac{\varepsilon^{3}}{3!}+\cdots+(-1)^{\frac{p}{2}+1}\varphi^{(p-1)}(\theta)\frac{\varepsilon^{p-1}}{(p-1)!}+O(\varepsilon^{p+1}). \notag
\end{align}
By virtue of (\ref{eq:C}), for sufficiently small $\varepsilon>0$, we obtain
\begin{equation*}
    2\sum_{n:\text{odd}}t_n(e^{n\varepsilon}-e^{-n\varepsilon})\cos n\theta-a\varepsilon \le -\frac{\varepsilon\theta^p}{2C}.
\end{equation*}
Introducing the scaled variable $\theta = \varepsilon u$, we find 
\begin{align*}
    &\quad\,\,\frac{1}{\varepsilon}\int_{-\pi}^{\pi}\exp\left(\frac{1}{\varepsilon^{p+1}}\left\{ 2\sum_{n:\text{odd}}t_n(e^{n\varepsilon}-e^{-n\varepsilon})\cos n\theta-a\varepsilon\right\} \right)\mathrm{d}\theta   \notag\\
    &\le \int_{-\pi/\varepsilon}^{\pi/\varepsilon}\exp\left(-\frac{u^p}{2C}\right)\mathrm{d}u  \notag\\
    &= O(1) \quad (\text{as }\varepsilon \to 0).
\end{align*}
This bounds the integral and completes the proof.
\end{proof}
\begin{proof}[Proof of Theorem \ref{thm:kernel_limit}]
Rescaling the variables in equation (\ref{eq:K}) according to (ii) and employing the parity relation $J(-i)=(-1)^iJ(i)$, we write
\begin{align*}
    &\quad\,\,\frac{1}{\varepsilon}(-1)^{\frac{a}{\varepsilon^{p+1}}+\frac{y}{\varepsilon}}K\left(\frac{a}{\varepsilon^{p+1}}+\frac{x}{\varepsilon},-\frac{a}{\varepsilon^{p+1}}-\frac{y}{\varepsilon};\frac{t}{\varepsilon^{p+1}}\right) \\
    &= \frac{\varepsilon}{2}\left(\frac{1}{\varepsilon}J\left(\frac{a}{\varepsilon^{p+1}}+\frac{x}{\varepsilon};\frac{t}{\varepsilon^{p+1}}\right)\right)\left(\frac{1}{\varepsilon}J\left(\frac{a}{\varepsilon^{p+1}}+\frac{y}{\varepsilon};\frac{t}{\varepsilon^{p+1}}\right)\right) \\
    &\quad+\sum_{i=1}^{\infty}\varepsilon\left(\frac{1}{\varepsilon}J\left(\frac{a}{\varepsilon^{p+1}}+\frac{1}{\varepsilon}(x+i\varepsilon);\frac{t}{\varepsilon^{p+1}}\right)\right)\left(\frac{1}{\varepsilon}J\left(\frac{a}{\varepsilon^{p+1}}+\frac{1}{\varepsilon}(y+i\varepsilon);\frac{t}{\varepsilon^{p+1}}\right)\right).
\end{align*}

This infinite sum may be interpreted as the Riemann sum of a step function. 
By Lemma \ref{lem:J_bound}, for sufficiently small $\varepsilon$, this step function is uniformly bounded by an exponentially decaying profile. 
Furthermore, by Theorem \ref{thm:Airy}, the step function converges pointwise to the product of $p$-Airy functions.
Invoking Lebesgue's dominated convergence theorem, we thus deduce that 
\begin{align*}
    &\lim_{\varepsilon\to0}\frac{1}{\varepsilon}(-1)^{\frac{a}{\varepsilon^{p+1}}+\frac{y}{\varepsilon}}K\left(\frac{a}{\varepsilon^{p+1}}+\frac{x}{\varepsilon},-\frac{a}{\varepsilon^{p+1}}-\frac{y}{\varepsilon};\frac{t}{\varepsilon^{p+1}}\right)    \notag\\
    &\qquad=\int_0^\infty {\rm Ai}_p(x+z){\rm Ai}_p(y+z){\rm d}z.
\end{align*}

It remains to establish (i) and (iii). We shall detail the proof of (i) alone, as the argument for (iii) is entirely analogous. From equation (\ref{eq:K}), we have 
\begin{align*}
&\frac{1}{\varepsilon}K\left(\frac{a}{\varepsilon^{p+1}}+\frac{x}{\varepsilon},\frac{a}{\varepsilon^{p+1}}+\frac{y}{\varepsilon};\frac{t}{\varepsilon^{p+1}}\right) \\
&= \frac{1}{2(2\pi{\rm i})^2\varepsilon}\oiint_{|z|=e^{2\varepsilon},|w|=e^{\varepsilon}}\exp\left\{\frac{1}{\varepsilon^{p+1}}(S(z,a)+S(w,a))+\frac{1}{\varepsilon}(x\log z+y\log w)\right\}\frac{z-w}{z+w}\frac{{\rm d}z{\rm d}w}{zw}.
\end{align*}
Let us observe that, by (\ref{eq:observation}), 
\begin{equation*}
    \operatorname{Re}S(e^{\varepsilon+{\rm i}\theta},a)
    = \varphi'(\theta)\varepsilon - \varphi^{(3)}(\theta)\frac{\varepsilon^{3}}{3!}+\cdots+(-1)^{\frac{p}{2}+1}\varphi^{(p-1)}(\theta)\frac{\varepsilon^{p-1}}{(p-1)!}+O(\varepsilon^{p+1}).
\end{equation*}
We fix a constant $\delta \in (0,1)$ such that $p\delta>p-2$, and define the \emph{central region} $I$ as 
\begin{equation*}
    I=\{(z,w)\mid \abs{z}=e^{2\varepsilon},\abs{w}=e^\varepsilon,\abs{\arg z},\abs{\arg w}<\varepsilon^{1-\delta}\}.
\end{equation*}
We denote its complement on the integration contours by $I^c=\{\abs{z}=e^{2\varepsilon}\}\times\{\abs{w}=e^\varepsilon\}\setminus I$.

\case{1} $(z,w)\in I^c$. Without loss of generality, let us assume $\abs{\arg z}>\varepsilon^{1-\delta}$. Under this assumption, equation (\ref{eq:C}) yields 
\begin{equation*}
    \frac{1}{\varepsilon^{p+1}}\operatorname{Re}S(z,a)\le \frac{\varphi'(\theta)}{\varepsilon^p}\le-\frac{\theta^p}{C\varepsilon^p}\le-\frac{\varepsilon^{-p\delta}}{C}.
\end{equation*}
Meanwhile, since 
\begin{equation*}
    \frac{{\rm d}}{{\rm d}\varepsilon}\operatorname{Re}S(e^{\varepsilon+{\rm i}\theta},a)=\varphi'(\theta)+O(\varepsilon^2)\le O(\varepsilon^2),
\end{equation*}
the mean value theorem provides 
\begin{equation*}
    \frac{1}{\varepsilon^{p+1}}\operatorname{Re}S(w,a) = O\left(\frac{1}{\varepsilon^{p-2}}\right)=o(\varepsilon^{-p\delta}).
\end{equation*}
Furthermore, we note that 
\begin{equation*}
    \frac{1}{\varepsilon}(x\log z+y\log w) = O(1),\quad \frac{z-w}{z+w}=O\left(\frac{1}{\varepsilon}\right).
\end{equation*}
Consequently, we deduce 
\begin{align*}
    &\abs{\frac{1}{\varepsilon}\oiint_{(z,w)\in I^c}\exp\left\{\frac{1}{\varepsilon^{p+1}}(S(z,a)+S(w,a))+\frac{1}{\varepsilon}(x\log z+y\log w)\right\}\frac{z-w}{z+w}\frac{{\rm d}z{\rm d}w}{zw}}\\
    &\qquad =O\left(\frac{1}{\varepsilon^2}\exp\left(-\frac{\varepsilon^{-p\delta}}{C}\right)\right)\to 0.
\end{align*}

\case{2} $(z,w)\in I$. In this regime, bounding the terms as 
\begin{equation}
    \frac{1}{\varepsilon}\operatorname{Re}S(e^{\varepsilon+{\rm i}\theta},a) \le -\frac{\theta^p}{C\varepsilon^p},
    \quad \frac{1}{\varepsilon}(x\log z+y\log w) = O(1),
    \quad \frac{z-w}{z+w}=O(1),
\label{eq:80}
\end{equation}
yields
\begin{align*}
    &\quad\,\,\abs{\frac{1}{\varepsilon}\oiint_{(z,w)\in I}\exp\left\{\frac{1}{\varepsilon^{p+1}}(S(z,a)+S(w,a))+\frac{1}{\varepsilon}(x\log z+y\log w)\right\}\frac{z-w}{z+w}\frac{{\rm d}z{\rm d}w}{zw}}\\
    &\le \frac{1}{\varepsilon}\int_{-\varepsilon^{1-\delta}}^{\varepsilon^{1-\delta}}\exp\left(-\frac{\theta^p}{C(2\varepsilon)^p}+O(1)\right){\rm d}\theta\cdot
    \int_{-\varepsilon^{1-\delta}}^{\varepsilon^{1-\delta}}
    \exp\left(-\frac{\phi^p}{C\varepsilon^p}+O(1)\right){\rm d}\phi\\
    &=\varepsilon\int_{-\varepsilon^{-\delta}}^{\varepsilon^{-\delta}}\exp\left(-\frac{u^p}{2^pC}+O(1)\right){\rm d}u\cdot
    \int_{-\varepsilon^{-\delta}}^{\varepsilon^{-\delta}}\exp\left(-\frac{v^p}{C}+O(1)\right){\rm d}v\\
    &\to 0,
\end{align*}
where we have introduced the scaled variables $\theta=\varepsilon u$ and $\phi=\varepsilon v$.
\end{proof}

\begin{remark}
The line of reasoning employed in the proof of (i) breaks down for case (ii). 
This fundamentally occurs because the bounded estimate $\frac{z-w}{z+w}=O(1)$ in (\ref{eq:80}) is replaced by the singular behaviour $\frac{z+w}{z-w}=O(\varepsilon^{-1})$.
\end{remark}

%%先生コメント向け
%To conclude this section, let us briefly touch upon the gap probability $P(I)$ for a given subset $I \subset \mathbb{Z}_{\ge 0}$. 
%$P(I)$ can be formulated as a Fredholm Pfaffian. Crucially, as we pass to the multicritical limit, the off-diagonal block entries of the underlying matrix kernel vanish. 
%Consequently, this Fredholm Pfaffian reduces to the Fredholm determinant of the $p$-Airy kernel. 
%We defer a more comprehensive exposition of this mechanism to \autoref{sec:gap_prob}.

%\todo[inline]{Maybe we may move the discussion of \autoref{sec:gap_prob} here and just state the proposition (without giving a proof if we don't have time; one can add a proof in the revised version).}

\subsection*{Gap probability}
To conclude this section, let us briefly touch upon the gap probability.
Let $I \subset \mathbb{Z}_{>0}$ be a discrete interval. 
We consider the gap probability with respect to $I$, such that no point is found in $I$,
\[
P(I) = \sum_{n=0}^\infty \frac{(-1)^n}{n!} \sum_{x_1 \in I} \cdots \sum_{x_n \in I} \rho(x_1,\ldots,x_n;t) \, .
\]
In the case of the shifted Schur measure, the Pfaffian formula of the correlation function (Theorem~\ref{thm:PfPP}) yields the Fredholm Pfaffian formula for the gap probability.
Let $K$ be a linear operator acting on $\ell^2(I;\mathbb{R}^2)$,
\[
K : 
\begin{pmatrix}
    f(x) \\ g(x)
\end{pmatrix} \longmapsto \sum_{y \in I} 
\begin{pmatrix}
    K(x,y) & (-1)^y K(x,-y) \\ (-1)^x K(-x,y) & (-1)^{x+y} K(-x,-y)
\end{pmatrix}
\begin{pmatrix}
    f(y) \\ g(y)
\end{pmatrix} \, ,
\]
and also
\[
J : 
\begin{pmatrix}
    f(x) \\ g(x)
\end{pmatrix} \longmapsto \sum_{y \in I} 
\begin{pmatrix}
    0 & \delta_{x,y} \\ -\delta_{x,y} & 0
\end{pmatrix}
\begin{pmatrix}
    f(y) \\ g(y)
\end{pmatrix} = 
\begin{pmatrix}
    g(x) \\ -f(x)
\end{pmatrix} \, .
\]
Then, the gap probability is given by the Fredholm Pfaffian,
\[
P(I) = \sum_{n=0}^\infty \frac{(-1)^n}{n!} \sum_{x_1 \in I} \cdots \sum_{x_n \in I} \operatorname*{Pf}_{1 \le i, j \le 2n} M(\{x_1,\ldots,x_n\})_{i,j} = \operatorname{Pf}(J - K)_{\ell^2(I;\mathbb{R}^2)} \, .
\]
Arguing the same way as in \cite[Theorem 1]{MR4693927}, we obtain the following result.
\begin{proposition}
    Let $p \in 2\mathbb{Z}_+$. Under the multicritical condition of degree $p$, we have
    \[
    \lim_{\varepsilon \to 0} \mathbb{P} \left[ \frac{\lambda_1 - a/\varepsilon^{p+1}}{\varepsilon} \le s \right] = F_p(s) \, ,
    \]
    where $F_p$ is the degree-$p$ Tracy--Widom distribution,
    \begin{align*}
        F_p(s) & = \det(1 - K_{p\text{-Airy}})_{L^2([s,\infty))} %\nonumber \\ &
        = \sum_{n=0}^\infty \frac{(-1)^n}{n!} \int_s^\infty \cdots \int_s^\infty \det_{1 \le i, j \le n} K_{p\text{-Airy}}(x_i,x_j) \, \mathrm{d}x_1 \cdots \mathrm{d}x_n \, .
    \end{align*}
\end{proposition}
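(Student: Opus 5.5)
We derive the convergence of the distribution function of the rescaled largest part $\lambda_1$ from the Fredholm Pfaffian representation of the gap probability, following the scheme of \cite[Theorem 1]{MR4693927}.

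\textbf{Step 1 (reduction to a gap probability).} Set $m_\varepsilon=\lfloor a/\varepsilon^{p+1}+s/\varepsilon\rfloor$ and $I_\varepsilon=\{m_\varepsilon+1,m_\varepsilon+2,\dots\}$. Then
\[
\mathbb{P}\left[(\lambda_1-a/\varepsilon^{p+1})/\varepsilon\le s\right]=P(I_\varepsilon)=\sum_{n\ge0}\frac{(-1)^n}{n!}\sum_{x_1,\dots,x_n\in I_\varepsilon}\mathrm{Pf}\, M(\{x_i\};t/\varepsilon^{p+1}).
\]
Writing $x_i=a/\varepsilon^{p+1}+k_i/\varepsilon$ with $k_i\in s+\varepsilon\mathbb{Z}_{>0}$ (up to the floor correction of the Remark), the $n$-fold sum becomes a Riemann sum, with mesh $\varepsilon$ in each $k_i$, of $\varepsilon^{-n}\mathrm{Pf}\,M$. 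By Theorem~\ref{thm:kernel_limit}, this integrand converges pointwise to $\det K_{p\text{-Airy}}(k_i,k_j)$. What remains is to justify exchanging the limit with the sum over $n$ and with the integrals over $[s,\infty)^n$.

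\textbf{Step 2 (uniform bounds on the rescaled kernel entries).} We need a bound
\[
|\varepsilon^{-1}M_{ij}|\le C' e^{-(k_i+k_j)/2},
\]
uniform in small $\varepsilon$ and for $k_i,k_j\ge s$, for all three blocks.

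For block (ii), use the expansion of $K$ as $\tfrac12 J J+\sum_k(-1)^k J J$, combined with Lemma~\ref{lem:J_bound}. The lemma is stated for $x>0$, but the same contour argument with radius $e^{\varepsilon}$ gives $|\varepsilon^{-1}J|\le Ce^{-x}$ for $x\ge s$, since $e^{-m\varepsilon}$ carries the factor $e^{-x}$ for any real $x$. This yields the bound $C^2e^{-x-y}\sum_i \varepsilon e^{-2i\varepsilon}$, which is $O(e^{-x-y})$.

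For blocks (i) and (iii), reuse the double contour integral from the proof of Theorem~\ref{thm:kernel_limit}(i) on $|z|=e^{2\varepsilon}$, $|w|=e^{\varepsilon}$. The factors $|z|^{x/\varepsilon}|w|^{y/\varepsilon}$ produce $e^{-2x-y}$ after orienting the exponent correctly (I would choose the radii as $e^{\pm\varepsilon}$ as needed so that the sign gives decay in $x,y$). The rest of the integrand is bounded as in that proof, giving $o(1)\,e^{-x-y}$.

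\textbf{Step 3 (Hadamard bound and dominated convergence).} With the entries bounded by $e^{-(k_i+k_j)/2}$ times a constant, factor out the diagonal weights. Then $|\mathrm{Pf}\,M|=|\det M|^{1/2}$, and Hadamard's inequality gives
\[
|\varepsilon^{-n}\mathrm{Pf}\,M|\le (2n)^{n/2}C'^{\,n}\prod_i e^{-k_i}.
\]
Summing over the lattice $k_i\in s+\varepsilon\mathbb{Z}_{>0}$ (Riemann sums of $e^{-k}$ are uniformly bounded by $Ce^{-s}$), the $n$-th term is bounded by $(2n)^{n/2}(C'')^n/n!$. This is summable in $n$, so dominated convergence applies both inside each term and across the series. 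The limit is the Fredholm determinant series for $\det(1-K_{p\text{-Airy}})_{L^2([s,\infty))}$, using $\mathrm{Pf}\begin{pmatrix}O&A\\-A^\top&O\end{pmatrix}=\pm\det A$; the sign should be checked against the ordering convention in $M$.

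\textbf{Main obstacle.} The main difficulty is Step 2: obtaining exponential decay in $x,y$ for the vanishing diagonal blocks, uniformly over the whole half-line rather than on compacts. For $k$ large, the pointwise Theorem~\ref{thm:kernel_limit} is insufficient. I would first try shifting the contour radii to $e^{\pm c\varepsilon}$ with $c$ tuned, verifying via (\ref{eq:observation}) and (\ref{eq:C}) that the real part of the action stays nonpositive. The factor $(z-w)/(z+w)$ is harmless there since $|z+w|\ge$ const near $\arg=0$.
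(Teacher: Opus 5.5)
Your proposal is correct and follows the route the paper indicates. The paper's own argument is only the pointer to \cite[Theorem 1]{MR4693927}, and you supply the details it omits: you write the law of $\lambda_1$ as the Fredholm Pfaffian gap probability and use Theorem~\ref{thm:kernel_limit} to kill the diagonal blocks and identify the off-diagonal block with $K_{p\text{-Airy}}$. You then justify the termwise limit with exponential bounds, namely Lemma~\ref{lem:J_bound} (valid for all $x\ge s$) and the contour estimate from the proof of Theorem~\ref{thm:kernel_limit}(i), where the $x,y$-dependence factors out exactly as $e^{-2x-y}$, combined with Hadamard's inequality and dominated convergence.

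Two small remarks. First, your threshold $m_\varepsilon=\lfloor a/\varepsilon^{p+1}+s/\varepsilon\rfloor$ corresponds to the event $\varepsilon(\lambda_1-a/\varepsilon^{p+1})\le s$. That is the normalisation consistent with the kernel scaling, so the division by $\varepsilon$ in the displayed statement appears to be a typo. Second, the reversed column ordering in $M(A;t)$ is exactly what makes the limiting Pfaffian equal to $+\det K_{p\text{-Airy}}(k_i,k_j)$, which settles the sign you left to check.
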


The simplest case $p = 2$ was proved by Matsumoto~\cite{MR2139724}, where $F_2(s)$ agrees with the GUE Tracy--Widom distribution~\cite{MR1257246}.

\appendix
\section{Higher Airy Functions}
\label{app:higher_Airy}

For any even integer $p \ge 2$, the \emph{$p$-Airy function} $\mathrm{Ai}_{p}(x)$ is defined by the integral
\begin{equation*}
    \mathrm{Ai}_{p}(x) = \frac{1}{\pi} \int_{0}^{\infty} \cos \left( \frac{t^{p+1}}{p+1} + xt \right) \mathrm{d}t, \quad x \in \mathbb{R}.
\end{equation*}
The convergence of this integral is verified via integration by parts:
\begin{align*}
\int^{R} \cos \left( \frac{t^{p+1}}{p+1} + xt \right) \mathrm{d}t 
  &= \int^{R} \frac{1}{t^{p}+x} \mathrm{d} \left( \sin \left( \frac{t^{p+1}}{p+1} + xt \right) \right) \notag\\
  &= \left[ \frac{\sin \left( \frac{t^{p+1}}{p+1} + xt \right)}{t^{p}+x} \right]^{R} + \int^{R} \frac{p t^{p-1}}{(t^{p}+x)^{2}} \sin \left( \frac{t^{p+1}}{p+1} + xt \right) \mathrm{d}t.
\end{align*}

To obtain the analytic continuation of $\mathrm{Ai}_{p}$ into the entire complex plane, we substitute $\zeta = \mathrm{i}t$, which yields
\begin{equation}
    \mathrm{Ai}_{p}(x) = \frac{1}{2\pi\mathrm{i}} \int_{-\mathrm{i}\infty}^{\mathrm{i}\infty} \exp \left( (-1)^{\frac{p}{2}+1} \frac{\zeta^{p+1}}{p+1} - x\zeta \right) \mathrm{d}\zeta.
\label{eq:app_int_rep}
\end{equation}
Consider the integral
\begin{equation*}
    I(R) = \int_{\gamma} \left| \exp \left( (-1)^{\frac{p}{2}+1} \frac{\zeta^{p+1}}{p+1} - x\zeta \right) \right| |\mathrm{d}\zeta|,
\end{equation*}
where the contour $\gamma$ is parameterised by $\gamma(\theta) = \mathrm{i} R e^{-\mathrm{i}\frac{\theta}{p+1}}$ for $0 \le \theta \le \pi$.

Applying Jordan's inequality, we obtain the estimate
\begin{align*}
    I(R) 
    &= \frac{R}{p+1} \int_{0}^{\pi} \exp \left( -\frac{R^{p+1}}{p+1} \sin\theta - xR \sin \frac{\theta}{p+1} \right) \mathrm{d}\theta \notag\\
    &\le \frac{\pi R}{p+1} \exp \left( -\frac{R^{p+1}}{p+1} \frac{2}{\pi} + |x|R \right).
\end{align*}
It follows that $I(R)$ vanishes as $R \to \infty$. 
Consequently, the integration contour in (\ref{eq:app_int_rep}) may be deformed into any path originating at infinity within the sector $-\frac{\pi}{2} \le \arg \zeta \le -\frac{\pi}{2} + \frac{\pi}{p+1}$ and terminating at infinity in the conjugate sector $\frac{\pi}{2} - \frac{\pi}{p+1} \le \arg \zeta \le \frac{\pi}{2}$.

For a contour $\mathscr{L}$ that originates in the sector $-\frac{\pi}{2} + \delta \le \arg \zeta \le -\frac{\pi}{2} + \frac{\pi}{p+1} - \delta$ and terminates in $\frac{\pi}{2} - \frac{\pi}{p+1} + \delta \le \arg \zeta \le \frac{\pi}{2} - \delta$ for some $\delta > 0$, we define
\begin{equation}
    \mathrm{Ai}_{p}(z) = \frac{1}{2\pi\mathrm{i}} \int_{\mathscr{L}} \exp \left( (-1)^{\frac{p}{2}+1} \frac{\zeta^{p+1}}{p+1} - z\zeta \right) \mathrm{d}\zeta, \quad z \in \mathbb{C}.
\label{eq:app_Ai_complex}
\end{equation}
At the extremities of $\mathscr{L}$, the term $\exp(\zeta^{p+1}/(p+1))$ dominates $\exp(-z\zeta)$, ensuring that the integral (\ref{eq:app_Ai_complex}) converges absolutely and uniformly on any compact subset of $\mathbb{C}$. This provides the analytic continuation of $\mathrm{Ai}_{p}$ as an entire function. Henceforth, $\mathrm{Ai}_{p}$ shall refer to this holomorphic function.

By differentiating under the integral sign, it is evident that $\mathrm{Ai}_{p}$ satisfies the \emph{$p$-Airy equation}
\begin{equation*}
   \left[ (-1)^{\frac{p}{2}+1} \frac{\mathrm{d}^{p}}{\mathrm{d}z^{p}} - z \right] u = 0. 
\end{equation*}
This equation possesses $p$ linearly independent solutions. For each root $\tau$ of $\tau^{p} = (-1)^{\frac{p}{2}+1}$, there exists a solution with the asymptotic growth rate $\log u(x) \sim -\frac{p}{p+1} (\operatorname{Re} \tau) x^{\frac{p+1}{p}}$ as $\mathbb{R} \ni x \to \infty$ (see, e.g., \cite{MR203188}). A standard application of the saddle point method shows that
\begin{equation}
    \log \mathrm{Ai}_{p}(x) \sim -\frac{p}{p+1} (\operatorname{Re} \tau_{*}) x^{\frac{p+1}{p}} \quad \text{as } x \to \infty,
\label{eq:app_asymptotic}
\end{equation}
where $\operatorname{Re}\tau_{*} = \max \{ \operatorname{Re} \tau \mid \tau^{p} = (-1)^{\frac{p}{2}+1} \}$. 
Accordingly, $\mathrm{Ai}_{p}$ is the fastest decaying real-valued solution of the $p$-Airy equation as $x \to \infty$.

By virtue of the asymptotic estimate (\ref{eq:app_asymptotic}), we define the \emph{$p$-Airy kernel} as
\begin{equation*}
    K_{p\text{-}\mathrm{Airy}}(z,w) = \int_{0}^{\infty} \mathrm{Ai}_{p}(z+x) \mathrm{Ai}_{p}(w+x) \mathrm{d}x.
\end{equation*}

% References

\bibliographystyle{ytamsalpha}
\bibliography{ref}

\end{document}